\newtheorem{theorem}{Theorem}[section]
\newtheorem{lemma}[theorem]{Lemma}
\newtheorem{corollary}[theorem]{Corollary}
\newtheorem{proposition}[theorem]{Proposition}
\newtheorem{conjecture}[theorem]{Conjecture}
{\theorembodyfont{\rmfamily}  	
				\newtheorem{remark}[theorem]{Remark}
				\newtheorem{question}[theorem]{Question}
}
{\theoremstyle{break}	
{\theorembodyfont{\rmfamily} 
				\newtheorem{example}[theorem]{Example}
				\newtheorem{hypothesis}[theorem]{Hypothesis}
}}
\def\qed{\ifhmode\unskip\nobreak\fi\ifmmode\ifinner\else\hskip5pt\fi\fi
 \hfill\hbox{\hskip5pt\vrule width4pt height6pt depth1.5pt\hskip1pt}}
\newenvironment{proof}[1]{\smallskip \noindent {\bf #1}}{\qed\smallskip}
\def\bc{\begin{center}}
\def\ec{\end{center}}
\def\bthm{\begin{theorem}}
\def\ethm{\end{theorem}}
\def\bcor{\begin{corollary}}
\def\ecor{\end{corollary}}
\def\bprop{\begin{proposition}}
\def\eprop{\end{proposition}}
\def\blem{\begin{lemma}}
\def\elem{\end{lemma}}
\def\bpr{\begin{proof}}
\def\epr{\end{proof}}
\def\bex{\begin{example}}
\def\eex{\end{example}}
\def\brem{\begin{remark}}
\def\erem{\end{remark}}
\def\bhyp{\begin{hypothesis}}
\def\ehyp{\end{hypothesis}}
\def\bques{\begin{question}}
\def\eques{\end{question}}
\def\bconj{\begin{conjecture}}
\def\econj{\end{conjecture}}
\def\bdes{\begin{description}}
\def\edes{\end{description}}
\def\bitm{\begin{itemize}}
\def\eitm{\end{itemize}}
\def\ita{\item[(a)]}
\def\itb{\item[(b)]}
\def\itd{\item[(d)]}
\def\iti{\item[(i)]}
\def\itii{\item[(ii)]}
\def\itiii{\item[(iii)]}
\def\itiv{\item[(iv)]}
\def\itv{\item[(v)]}
\def\itvi{\item[(vi)]}
\def\beq{\begin{equation}}
\def\eeq{\end{equation}}
\def\ben{\begin{enumerate}}
\def\een{\end{enumerate}}
\def\beqar{\begin{eqnarray}}
\def\eeqar{\end{eqnarray}}
\def\beqarr{\begin{eqnarray*}}
\def\eeqarr{\end{eqnarray*}}
\def\R#1{{\bf R}^{#1}}   
\def\NN{{\bf N}}	   
\def\Np{{\bf N}_{+}}	   
\def\part{\partial}
\def\d#1dt{\frac{d#1}{dt}}    
\def\thl#1{\mylabel{th:#1}} 
\def\secl#1{\mylabel{sec:#1}} 
\def\rth#1{\ref{th:#1}}    		
\def\rsec#1{\ref{sec:#1}} 		 
\def\eps{\epsilon}
\def\del{\delta}
\def\ga#1{\gamma(#1)}
\def\oc#1{\ensuremath{\overline \gamma}(#1)}    
\def\om#1{\ensuremath\omega(#1)}
\def\Int{\mathrm{Int}\,}
\def\clos{\mathrm{clos}\,}
\def\Fix#1{{\mathrm}}
\def\Fix#1{{\mathrm Fix} (#1)}
\def\Int{\mathrm {Int}}
\def\Ep{E^+}
\def\Epp{\Int\,\Ep}
\def\clos{\mathrm{clos}\,}
\def\dist{\mathrm{dist}\,}
\def\supp{\:\mathrm{Supp}\:}
\def\SS{{\mathbf S}}
\def\oc#1{{\overline \gamma}(#1)}    
\def\om#1{\omega(#1)}
\def\mylabel#1{\label{#1}}
\def\oc#1{{\overline \gamma}(#1)}    
\def\om#1{\omega(#1)}
\begin{document}

\renewcommand\section{\@startsection {section}{1}{\z@}%
                                   {-3.5ex \@plus -1ex \@minus -.2ex}%
                                   {2.3ex \@plus.2ex}%
                                   {\normalfont\large\bfseries}}
\renewcommand\subsection{\@startsection{subsection}{2}{\z@}%
                                     {-3.25ex\@plus -1ex \@minus -.2ex}%
                                     {1.5ex \@plus .2ex}%
                                     {\normalfont\bfseries}}

$~$\\[.5in]
\begin{center}
{\large
\bf 
CHAIN TRANSITIVE SETS FOR SMOOTH STRONGLY MONOTONE
DYNAMICAL SYSTEMS}
\vskip.25in
  Morris W. Hirsch$^{1}$\\[2mm]
{\footnotesize $^1$Department of Mathematics\\University of California at 
Berkeley}
\end{center}

\medskip
{\footnotesize 
\noindent
{\bf Abstract.} Let $K$ denote a compact invariant set for a strongly
monotone semiflow in an ordered Banach space $E$, satisfying standard
smoothness and compactness assumptions. Suppose the semiflow restricted to
$K$ is chain transitive.  The main result is that either $K$ is
unordered, or else $K$ is contained in totally ordered, compact arc of
stationary points; and the latter cannot occur if the semiflow is real
analytic and dissipative.  As an application, entropy is $0$ when
$E=\R 3$.  Analogous results are proved for maps.  The main tools are
results of Mierczy\'nski \cite{Mi94} and Tere\v{s}\v{c}\'ak
\cite{Te94}.}

\smallskip
\noindent{\bf AMS subject classfication:} 37C65

\section{Introduction}\mylabel{sec:intro}

Throughout this paper $E$ denotes an ordered Banach space whose order
cone $\Ep=\{x\in E:x\ge 0\} $ has nonempty interior $\Epp$.
Our main results are Theorem  \rth{main} concerning smooth strongly monotone
maps, and its analogue for semiflows, Theorem  \rth{semi}. 

\paragraph{Notation and terminology.}
 We write $x\ge y$ if $x-y\in \Ep$ and
$x>y$
if $x\ge y$ and $x\ne y$, or equivalently, $x-y\in \Ep \setminus
\{0\}$. If $x-y\in \Epp$ we write $x\gg 0$.

 A subset $Y\subset X$ is {\em unordered} if none of its points satisfy
$x<y$.  We call $x$ and $y$ {\em comparable} in case $x\le y$ or $y\le x$.

If $A$ and $B$ are sets, notation such as $A>B$
means $a>b$ for all $a\in A, b\in B$.  We abuse notation slightly by
writing $A>b$ if $A$ is a set, $b$ is a point, and $A>\{b\}$. 
For any  subsets or points $A, B$   
\[A_+=\{x\in E: x\ge A\}, \quad\quad
A_-=\{x\in E: x\le A\} \]
\[ [A,B]=\{x\in E:A\le x\le B\}\]
\[ [[A,B]]= \{x\in E:A\ll  x\ll B\}\]
For  sets $A, B$ 
\[A^\bullet=\bigcup_{x\in A}x_+,\quad\quad
A_\bullet=\bigcup_{x\in A}x_-  \]
\[\lfloor A, B\rceil= A^\bullet\cap B_\bullet=
  \bigcup_{a\in A,b\in B}[a,b]\]
If $ A\subset B_\bullet$ then {\em $B$ majorizes $A$}, while if $A\subset
B^\bullet$ then {\em $B$ minorizes $A$}.  

$S:X\to X$ always denotes a continuous map defined in an open set
$X\subset E$.  The {\em orbit} $\ga x$ of $x\in X$ is the set $\{S^n
x:n\in \NN\}$ where $\NN=\{0,1,\ldots\}$ is the set of natural
numbers.  The {\em orbit closure} of $x$ is $\oc x =\clos \ga x$.  The
{\em omega limit set} of $x$ is $\om x=\bigcap_{k\ge 0} \oc{S^k x}$.

A point $p$ is {\em $m$-periodic} if $S^m p=p$.  If also $\om z =\ga
p$ for some $z>p$ then $p$ and $\ga p$ are {\em upper
attracting}.  A {\em lower attracting} periodic point is defined
dually (by reversing order relations).

The following hypotheses is always in force:
\begin{hypothesis}\thl {hyp0}
\begin{description}
\iti $X\subset E$ is an open set that is {\em order convex}: if $p,q
\in X$ then $X$ contains the closed order interval $[p,q]$.

\itii $S:X\to X$ is $C^1$ (continuously Frechet
differentiable).

\itiii 
$S$ is  {\em order compact}: the image of each closed order
interval $[p,q],\:p,q\in X$, has compact closure in $X$.

\itiv For each $x \in X$ the derivative of $S$ at $x$ is a {\em
strongly positive} linear operator $DS (x):E\to E$, i.e., $DS(x)u\gg
0$ if $u>0$.  This implies $S$ is {\em strongly monotone}: $Sx \gg Sy$
if $x>y$.

\itv  $S$ is {\em completely continuous}: the image of
any bounded set has compact closure in $X$.  Moreover, each derivative
$DS(x)$ is completely continuous. 

\itvi Every orbit closure is a compact subset of $X$.
\end{description}
\end{hypothesis} 
Such maps arise in ordinary, partial and functional differential
equations; see e.g.\ Dancer \& Hess \cite{DH91}, Hess \& Pol\'a\v{c}ik
\cite{HP93}, Hirsch \cite{Hi88, Hi94}, Pol\'a\v{c}ik \&
Tere\v{s}\v{c}\'ak \cite{PT91, PT93}, Smith \cite{Sm95}, Smith \&
Thieme \cite{ST91a}, Tak\'a\v{c} \cite{Ta90, Ta92}.

Our results are based on the following fundamental  property of this class
of maps (which does not require order compactness):
\begin{theorem}[\sc Tere\v{s}\v{c}\'ak \cite{Te94}]	
\mylabel{th:gen}
There exists a natural number $m\ge 1$ and an open dense set of points
$x$ such that $\om x$ is an $m$-periodic orbit.
\end{theorem}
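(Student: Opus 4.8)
\medskip
\noindent\textbf{Proof proposal.} The plan is to combine the order structure of strongly monotone maps with the smooth spectral theory of the linearized cocycle $\{DS^n(x)\}$, in the spirit of Mierczy\'nski \cite{Mi94}, Pol\'a\v{c}ik--Tere\v{s}\v{c}\'ak \cite{PT93} and Tere\v{s}\v{c}\'ak \cite{Te94}.

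\emph{Order-theoretic constraints.} First I would record the facts that use only strong monotonicity together with the compactness built into Hypothesis \ref{th:hyp0}: every orbit closure is compact; every $\om x$ is nonempty, compact, invariant and \emph{unordered} (nonordering of limit sets); and the \emph{Limit Set Dichotomy} holds --- if $x<y$ then $\om x=\om y$ or $\om x<\om y$. In particular $\om x$ meets any arc that is totally ordered by $\ll$ in at most one point, which already excludes most complicated recurrence inside $\om x$.

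\emph{The Floquet bundle.} The main tool is a $C^1$ invariant splitting of the tangent bundle. Since each $DS(x)$ is strongly positive and completely continuous, the Krein--Rutman theorem gives, fibrewise, a simple leading eigenvalue with a strongly positive eigenvector; propagating this along orbits --- order compactness and complete continuity supplying the compactness needed --- one constructs, over the closure of each orbit (and in particular over each limit set), a $DS$-invariant Whitney decomposition $E=E_1(x)\oplus E_2(x)$ with $E_1(x)=\RR\, w(x)$, $w(x)\gg 0$, where $E_2(x)$ is a closed codimension-one complement containing no positive vector and $E_1$ strictly dominates $E_2$ in exponential growth rate. The positive functional dual to $w(\cdot)$ is then strictly monotone along any nonconstant comparable arc of an orbit; this is the mechanism that forces $\om x$ to be a periodic orbit once the orbit becomes asymptotically tangent to the $E_1$-direction.

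\emph{Genericity and the uniform period.} Using the dominating line bundle, Mierczy\'nski's separatrix construction yields a codimension-one invariant Lipschitz (indeed $C^1$) lamination, and shows that the points which fail to be quasiconvergent, or which converge to a non-asymptotically-stable periodic orbit, are confined to a meager invariant set --- a countable union of nowhere dense invariant submanifolds. Consequently the set $U$ of points whose orbit eventually enters the basin of some asymptotically stable periodic orbit is open and has meager complement, hence is open and dense in $X$ by the Baire property. For $x\in U$, normal hyperbolicity in the $E_1$-direction forces $\om x$ to be a single asymptotically stable periodic orbit; a uniform bound on the periods of all such orbits --- itself part of the analysis, drawn from continuity of the Floquet data --- then lets us take $m$ to be their least common multiple, which proves the claim. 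The step I expect to be the real obstacle is exactly this last circle of ideas: building the Floquet bundle and the separatrix lamination with the stated regularity, and proving that the exceptional set is not merely ``small'' but genuinely meager, so that its complement is the open dense union of basins --- this is the technical heart of \cite{Mi94} and \cite{Te94}.
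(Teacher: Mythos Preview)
The paper does not prove this theorem. Theorem~\ref{th:gen} is stated with attribution to Tere\v{s}\v{c}\'ak \cite{Te94} and is used throughout the paper as a black box; the only commentary is the sentence immediately following it, noting that similar results under stronger hypotheses appear in \cite{Ta90,PT91,HP93}. There is therefore no ``paper's own proof'' to compare your proposal against.

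That said, a few remarks on your sketch as a stand-alone outline. The ingredients you list --- nonordering of limit sets, the Krein--Rutman principal bundle, the exponential separation of \cite{PT93}, and the codimension-one lamination of \cite{Mi94} --- are indeed the circle of ideas behind the cited result. Two places deserve more care. First, ``meager complement'' does not by itself yield ``open and dense''; you need to show directly that the good set is open (basins of stable periodic orbits are open) and that the bad set has empty interior, not merely that it is meager. Second, your final step takes $m$ to be the least common multiple of the periods of all asymptotically stable periodic orbits, which presupposes a \emph{finite} bound on those periods; this is a genuine theorem (Hess--Pol\'a\v{c}ik \cite{HP93}) rather than a soft consequence of ``continuity of the Floquet data,'' and should be invoked explicitly. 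You correctly flag the lamination/meagerness step as the technical heart; that is exactly where \cite{Te94} does the work that this paper simply imports.
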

Similar results under  stronger hypotheses have been obtained
 by  Tak\'a\v{c} \cite{Ta90}, Pol\'a\v{c}ik \& Tere\v{s}\v{c}\'ak
 \cite{PT91},  Hess \&  Pol\'a\v{c}ik \cite{HP93}. 

 The smallest number $m$ satisfying Theorem \rth{gen} is called the {\em
generic asymptotic period}.
 
\paragraph{Attractor-free sets, p-arcs and the main theorem for maps.}
Let $T:Y\to Y$ be a continuous map.  A subset $A\subset Y$ is an
{\em
attractor} if $A$ is invariant ($T (A)=A$) and contained in an open
set  $W\subset Y$ such that
\[\lim_{n\to\infty}\dist (T^n w, A)=0 \quad\mbox{uniformly in $w\in
W$}.\] If also  $A\ne Y$ then $A$ is a {\em proper attractor}.  $T$ is
{\em dissipative} if there is an attractor that contains all omega
limit points.  

Now let $Y\subset X$ be a compact invariant set for $S$.  If the map
$S|Y$ does not have a proper attractor, $Y$ is {\em attractor-free}.
By a theorem of Conley \cite{Co78} this is equivalent to $S|Y$ being
chain transitive (see Section \rsec{prelim}).  Every omega limit set
is attractor-free and unordered.  Attractor-free sets for semiflows
occur as limit sets of several kinds of dynamic and stochastic
processes (Bena\"{\i}m and Hirsch \cite{BH95, BH96, BH97}, Mischaikow
{\em et al.\ }\cite{MST95}, Thieme \cite {Th92, Th94}).
  
A {\em smooth arc} $J\subset E$ is the homeomorphic image of the
closed unit interval under an injective $C^1$ map $h:[0,1]\to E$ whose
derivative is nonzero everywhere.  We call such a $J$ a {\em p-arc}
for $S$ if $S(J)=J$ and $h'(t)\gg 0$ for all $t\in [0,1]$.  This makes
$J$ totally ordered.\footnote
{The concept of p-arc is due to Mierczy\'nski \cite{Mi94}, who also
allowed degenerate p-arcs, i.e., fixed points.  Here we allow only
nondegenerate p-arcs.}
The endpoints $h(0),\,h (1)$ are fixed points.  The set of endpoints
is denoted by $ \partial J$.

A p-arc is {\em stationary} if it consists of fixed points.  It is
easy to see that if $J$ is a stationary p-arc for $S^m$, then each
point of $J$ is $m$-periodic,  there is a divisor $d\ge 1$ of $m$
such that the arcs $J, S(J),\ldots, S^{d-1}(J)$ are disjoint and
permuted cyclically by $S$, and if $0\le i<j\le d-1$ then no point of
$S^iJ$ is related to any point of $ S^j J$.

We can now state our main results for maps.  The following two
theorems refer to a map $S:X\to X$ satisfying Hypotheses \rth{hyp0},
with generic asymptotic period $m\ge 1$. 

\begin{theorem} \mylabel{th:main}  
 Let $K\subset X$ be an
attractor-free set.  Then either
\begin{description}
\ita  $K$ is unordered; or else

\itb $K=\bigcup_{0\le i<m} S^i (J)$ where $J$ is a stationary p-arc
for $S^m$.
\end{description}
Moreover 
$K$ is unordered provided $S$ is real analytic and dissipative.
\end{theorem}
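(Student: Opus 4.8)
I would argue by contradiction against alternative (a): assume $K$ is not unordered, deduce (b), and then show that when $S$ is real analytic and dissipative alternative (b) cannot occur, so that (a) must hold. Suppose, then, that $a,b\in K$ with $a<b$. Since $K$ is invariant and $S$ strongly monotone, $S^{n}a\ll S^{n}b$ for all $n\ge1$, so after replacing $a,b$ by omega limit points and using order compactness I may assume there are $a,b\in K$ with $a\ll b$. The decisive hypothesis is that $S|K$ is chain transitive, equivalently attractor-free; the plan is to use this together with Theorem~\rth{gen} and the results of \cite{Mi94,Te94}, first to show every point of $K$ is $m$-periodic, then to organise $K$ into stationary p-arcs for $S^{m}$, and finally to rule out the latter in the analytic dissipative case.

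\emph{Deriving (b).} The set $G$ of points $x$ for which $\om x$ is an $m$-periodic orbit is open and dense (Theorem~\rth{gen}), and since $K$ is compact with $\Epp\neq\emptyset$ I can choose $a',b'\in G$ with $a'\ll z\ll b'$ for every $z\in K$; after one more application of strong monotonicity, the $m$-periodic orbits $\om{a'},\om{b'}$ trap $K$ in a compact invariant region lying between two $m$-periodic orbits. On such a trapped piece any $x\in K$ with $S^{m}x\neq x$ is comparable to $S^{m}x$, hence has monotone $S^{m}$-iterates, which by order compactness converge to a fixed point; slightly perturbing to strictly ordered starting points then produces a proper attractor meeting $K$, against attractor-freeness. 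So $S^{m}x=x$ for all $x\in K$. Write $T=S^{m}$, so $K\subset\mathrm{Fix}\,T$; then $S$ permutes the maximal totally ordered pieces of $K$ cyclically, with period a divisor $d$ of $m$, and it remains to see that one such piece $J$ is a $C^{1}$ arc with tangent $\gg0$. At each $x\in J$, $DT(x)$ is strongly positive and completely continuous, so by Krein--Rutman it has a simple principal eigenvalue with one-dimensional eigenspace spanned by some $v(x)\gg0$; the smoothness and invariant-foliation results of Mierczy\'nski \cite{Mi94} and Tere\v{s}\v{c}\'ak \cite{Te94} then realise $J$ as a $C^{1}$ arc tangent to $v$, i.e.\ a stationary p-arc for $T$. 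Together with the cyclic-permutation remarks stated before the theorem this gives $K=\bigcup_{0\le i<m}S^{i}(J)$, which is (b). Realising $J$ as a bona fide $C^{1}$ arc from the abstract machinery of \cite{Mi94,Te94}, where complete continuity and order compactness are essential, is the step I expect to be the main obstacle.

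\emph{The analytic dissipative case.} Assume $S$ real analytic and dissipative, and suppose (b) holds with $J$ a stationary p-arc for $T=S^{m}$ parametrised by $h:[0,1]\to E$, $Th(t)=h(t)$, $h'(t)\gg0$. Differentiating, $DT(h(t))h'(t)=h'(t)$, so $h'(t)\gg0$ is a positive eigenvector of the strongly positive completely continuous operator $DT(h(t))$; by Krein--Rutman the eigenvalue $1$ is its spectral radius and is simple, hence isolated in the spectrum. Now extend $J$ upward: at the top fixed point $p$ of a p-arc, $1$ is a simple isolated eigenvalue of $DT(p)$, so a Lyapunov--Schmidt reduction expresses $\mathrm{Fix}\,T$ near $p$ as the zero set of a real-analytic scalar function $g$ along the principal direction $v(p)$; since $g$ vanishes along the part of the arc below $p$, it vanishes identically near $p$ by analyticity, and $\mathrm{Fix}\,T$ near $p$ is a $C^{1}$ arc with tangent $v(p)\gg0$ prolonging the p-arc past $p$. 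Iterating, one gets an increasing union $J_{*}$ of p-arcs, totally ordered, consisting of fixed points; dissipativity confines $J_{*}$ to the attractor and order compactness makes $\overline{J_{*}}$ compact, hence with a top point $e$, where $\rho(DT(e))=1$ by continuity and the same Lyapunov--Schmidt argument again prolongs the arc above $e$ --- contradicting $e=\sup\overline{J_{*}}$. Therefore no stationary p-arc exists and $K$ is unordered. The subtle point is this analytic-continuation step, which converts the rigidity forced by Krein--Rutman ($\rho\equiv1$ along the arc) into the impossibility of the arc ever terminating.
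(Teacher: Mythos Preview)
Your argument for (b) has a genuine gap at its central step. You assert that once $K$ is trapped between two $m$-periodic orbits, ``any $x\in K$ with $S^{m}x\neq x$ is comparable to $S^{m}x$, hence has monotone $S^{m}$-iterates.'' This is unjustified and in general false: lying in an order interval $[P,Q]$ with $P,Q$ $m$-periodic says nothing about the order relation between $x$ and $S^{m}x$. Orbits of strongly monotone maps are \emph{not} typically monotone sequences, and the whole difficulty of the theorem is precisely that one cannot read off periodicity of points in $K$ so directly. Without this step your argument that $K\subset\mathrm{Fix}\,S^{m}$ collapses, and with it the rest of the derivation of (b). The subsequent claim that ``$S$ permutes the maximal totally ordered pieces of $K$ cyclically'' is also not justified; even granting $K\subset\mathrm{Fix}\,S^{m}$, you would need to show that such pieces exist, are connected, and are disjoint.

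The paper's route is quite different. It never tries to show directly that points of $K$ are periodic. Instead, for $m=1$ it exploits chain transitivity via an $\epsilon$-chain argument (Proposition~\rth{basic}) to prove that \emph{no} orbit meeting $[a,b]$ can converge eventually monotonically. This is then played off against Mierczy\'nski's result (Theorem~\rth{mier1}) that the set of points converging non-monotonically to a given fixed point is unordered, together with generic convergence (Theorem~\rth{gen}), to show every point of $[a,b]$ is convergent with distinct limits for strictly ordered pairs. The p-arc is then built from these limit fixed points via Theorem~\rth{parcs}, and the contracting-collar machinery (Lemma~\rth{leaves}, Propositions~\rth{pnbd}--\rth{pnbd4}) is used to show the p-arc $G$ is an attractor for $S|K$, forcing $K=G$. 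The case $m>1$ is handled by a separate combinatorial argument (Section~\rsec{red}) decomposing $K$ into $S^{m}$-attractor-free pieces $K_0,\ldots,K_{m-1}$ and showing no two points in different pieces are comparable. Your treatment of the analytic dissipative case is essentially right in spirit; the paper outsources the analytic-continuation step to Jiang and Yu \cite{JY96}.
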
 

The following result complements shows that except for upper
attracting $m$-periodic orbits, unordered attractor-free sets are
rather unstable:
 
\begin{theorem}		\mylabel{th:BH}
Let $K$ be an unordered attractor-free set.  Then either $K$ is an
upper attracting $m$-periodic orbit, or else
 there exists a lower attracting   $m$-periodic  point $q$  such that
$\ga q$ majorizes $K$ and $K$ minorizes $\ga q$, and
$\om y=\ga {q}$ if $x<y<u$ for some $x\in K, u\in \ga q$. 
The dual result also holds.
\end{theorem}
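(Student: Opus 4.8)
\smallskip
\noindent\emph{Proof idea.}
The plan is to probe $K$ from above, by following orbits of points strictly dominating a point of $K$; the statement probing from below --- the ``dual result'' --- then follows by reversing all order relations. Let $G$ be the open dense set of points with $m$-periodic omega limit sets (Theorem~\rth{gen}). Fix $x_0\in K$. Since $\{w\in X:w\gg x_0\}$ is open and nonempty (it contains $x_0+\epsilon e$ for any $e\gg0$ and all small $\epsilon>0$), it meets $G$; pick $u$ there and set $\gamma(q):=\omega(u)$. From $u>x_0$ and strong monotonicity, $S^nu\gg S^nx_0\in K$ for all $n\ge1$, so in the limit every point of $\gamma(q)$ dominates a point of $K$: $K$ minorizes $\gamma(q)$. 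Because $\gamma(q)$ is a single orbit and $K$ is invariant, either $\gamma(q)\subset K$ or $\gamma(q)\cap K=\emptyset$; in the latter case, applying $S^m$ (which fixes $\gamma(q)$ pointwise) to the ensuing strict inequalities $x<S^iq$ ($x\in K$) yields $S^mx\ll S^iq$, so in fact $\gamma(q)\gg K$ pointwise-existentially.

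First I would dispose of the ``absorbing'' alternative: suppose that for \emph{every} $x\in K$ and every $u\in G$ with $u\gg x$ one has $\omega(u)\subset K$. By the limit set dichotomy (its strict branch being impossible for two points of the unordered set $K$), every point of $K$ then tends to an $m$-periodic orbit contained in $K$; since $K$ is chain transitive and maps of this class have no cycle of connecting orbits between distinct periodic orbits, $K$ reduces to a single $m$-periodic orbit $\gamma(p)$. For $u\in G$ with $u\gg p$ we then get $\omega(u)\subset\gamma(p)$, hence $\omega(u)=\gamma(p)$, so $\gamma(p)$ is upper attracting --- alternative (a). Otherwise we may keep $x_0$, $u\in G$, $x_0\ll u$ with $\gamma(q)=\omega(u)$ disjoint from $K$, so $\gamma(q)\gg K$ pointwise-existentially.

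In this remaining case, alternative (b) is proved in three steps. \emph{$\gamma(q)$ majorizes $K$.} Let $U:=K\cap\gamma(q)_\bullet$; it is closed in $K$, forward invariant, and contains a point $z_1\in K$ with $q\gg z_1$. Each $x\in U$ satisfies $x<S^iq$ strictly (disjointness), hence $S^mx\ll S^iq$, so $S^m(U)$ lies in the $K$-interior of $U$. Thus $U$ is a trapping region, $\bigcup_{i<m}S^i\big(\bigcap_{k\ge 0}S^{km}(U)\big)$ is an attractor for $S|K$ contained in $U$, and since $K$ is attractor-free, $U=K$. \emph{Minimality and lower attraction.} Replace $\gamma(q)$ by a \emph{minimal} element --- in the partial order where $\gamma(a)\preceq\gamma(b)$ iff $\gamma(b)$ majorizes $\gamma(a)$ --- among periodic orbits disjoint from $K$ and minorized by $K$; the existence of minimal elements, and the fact that an orbit caught between $K$ and such a $\gamma(q)$ must converge to $\gamma(q)$ rather than to any intermediate periodic orbit (whether strictly below $\gamma(q)$ or lying in $K$), is exactly where order compactness and the description of periodic orbits of Mierczy\'nski~\cite{Mi94} and Tere\v{s}\v{c}\'ak~\cite{Te94} are used; cases in which such an intermediate orbit meets $K$ are absorbed into alternative (a). In particular, for $z\in G$ with $z_1\ll z\ll q$, $\omega(z)$ is a periodic orbit squeezed between $K$ and $\gamma(q)$, hence equal to $\gamma(q)$; as $z<q$, this shows $q$ is lower attracting. \emph{The basin.} Given $y$ with $x<y<u$, $x\in K$, $u\in\gamma(q)$, replace $(x,y)$ by $(S^mx,S^my)$ so that $x\ll y\ll u$ without altering $\omega(y)$. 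Choose $z\in G$ with $x\ll z\ll y$; then $\omega(z)=\gamma(q)$, and $S^nz\ll S^ny\ll S^nu$ with $\omega(z)=\omega(u)=\gamma(q)$ forces $\omega(y)$ to be both minorized and majorized by $\gamma(q)$, so $\omega(y)\subset\lfloor\gamma(q),\gamma(q)\rceil$. The latter set consists of all $w$ with $S^iq\le w\le S^jq$ for some $i,j$, and since $\gamma(q)$ is unordered it equals $\gamma(q)$; hence $\omega(y)=\gamma(q)$.

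The dual statement follows by reversing all order relations. The one genuinely delicate ingredient is the selection of a minimal periodic orbit above $K$ and the exclusion of intermediate periodic orbits as limits of squeezed points; everything else is squeezing by strong monotonicity, the trapping-region picture of attractors, attractor-freeness, and elementary properties of omega limit sets.
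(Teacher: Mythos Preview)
Your overall architecture matches the paper's: probe $K$ from above using Tere\v{s}\v{c}\'ak's generic convergence, show the resulting periodic orbit majorizes $K$ via a trapping-region argument, pass to a minimal such orbit, and squeeze to get the basin. The trapping-region step (your ``$\gamma(q)$ majorizes $K$'') and the final squeezing for the basin are essentially the paper's arguments.

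The genuine gap is your treatment of the ``absorbing alternative,'' where the periodic limit of the probe falls into $K$. You argue: every point of $K$ then converges to an $m$-periodic orbit in $K$, and ``since $K$ is chain transitive and maps of this class have no cycle of connecting orbits between distinct periodic orbits, $K$ reduces to a single $m$-periodic orbit.'' Granting the limit-set dichotomy, one can indeed show (by choosing a single $u\in G$ with $u\gg x_1,x_2$) that all points of $K$ share the \emph{same} $m$-periodic omega limit $\gamma(p)\subset K$. But this does \emph{not} force $K=\gamma(p)$: a homoclinic loop to $\gamma(p)$ would give a compact invariant chain-transitive $K$ in which every orbit converges to $\gamma(p)$ yet $K\neq\gamma(p)$. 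Your ``no cycle'' assertion is exactly what is needed to exclude this, and you have not proved it; nor does it follow from anything cited.

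The paper closes this hole with a different, concrete mechanism. Working with a single probe $z\in[[y,x]]$ ($y\in K$) with $\omega(z)=\{p\}$, it supposes $p\in K$; unorderedness of $K$ forces $\omega(y)=\{p\}$ as well, and then Mierczy\'nski's Theorem~\rth{mier1} (the set of points converging to $p$ non-monotonically is unordered) forces $\gamma(z)\searrow p$. This monotone descent produces an explicit trapping region $N=\{w\in K:w\le S^n z\}$, which by attractor-freeness equals $K$; letting $n\to\infty$ gives $K\le p$, hence $K=\{p\}$, the desired contradiction. So the missing idea is precisely the appeal to Theorem~\rth{mier1} to manufacture a trapping region, in place of an unproved acyclicity claim. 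Your later vague reference to Mierczy\'nski in the minimality step does not substitute for this, and the paper's minimality argument is in fact the straightforward one: compactness gives a minimal fixed point $q\gg K$, and the already-established fact that any $x\in\lfloor K,q\rceil\setminus K$ has $\omega(x)\ge p\gg K$ for some fixed $p$, combined with $\omega(x)\le q$, pins $\omega(x)=\{q\}$.
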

Thus if $K$ is not an upper attracting $m$-periodic orbit,
it lies in the upper boundary of the basin of attraction of $\ga {q}$.
\paragraph{The main theorem for semiflows.}   \mylabel{sec:semi}
Let ${\mathbf S} =\{S_t:X\to X\}_{o\le t<\infty}$ be a {\em semiflow}
in $X$, i.e., $S_tx$ is continuous in $(t,x)$, $S_t\circ S_r=S_{t+r}$,
and $S_0$ is the identity map of $X$.  A point $p$ is an {\em
equilibrium} if $S_tp=p$ for all $t\ge 0$.  We always assume:

\begin{hypothesis}\thl {hyp1}
 Each map $S_t:X\to X,\: t>0$ satisfies Hypothesis \rth{hyp0}.
\end{hypothesis}

Let $Y\subset X$ be invariant under $\mathbf S$, i.e., $S_t(Y)=Y$ for
all $t$.  An {\em attractor} for the restricted semiflow 
${\mathbf S}|Y=\{S_t|Y:Y\to Y\}$ 
is a nonempty compact invariant $A\subset Y$
having a neighborhood $V\subset Y$ such that
$\lim_{t\to\infty}\dist (S_ty, A)=0$ uniformly in $v\in
V$.  The definitions for semiflows of  proper attractor, 
attractor-free, and dissipative are similar to those for maps. 

A smooth arc $J\subset X$ is a {\em p-arc for $\mathbf S$} if it is a
p-arc for every map $S_t$.  If in addition every point of $J$ is an
equilibrium, then $J$ is a {\em stationary} p-arc for $\SS$.

The analogue of Theorem  \rth{main} for semiflows is:

\begin{theorem}		\mylabel{th:semi}
Assume the semiflow $\mathbf S$ satisfies Hypothesis \rth{hyp0} and
let $K\subset X$ be an attractor-free set.  Then either $K$ is
unordered, or $K$ is a stationary p-arc for $\mathbf S$.  If $\mathbf
S$ is dissipative and real analytic, then $K$ is unordered.
\end{theorem}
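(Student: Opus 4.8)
The plan is to reduce Theorem \rth{semi} to the map case, Theorem \rth{main}, by applying the latter to the time-$t$ maps $S_t$ and then using the semigroup law $S_s\circ S_r=S_{s+r}$ to upgrade the periodicity produced by Theorem \rth{main} into genuine equilibrium behaviour. First I would record the reduction: by Hypothesis \rth{hyp1} each $S_t$ with $t>0$ satisfies Hypothesis \rth{hyp0}, and by the preliminary material of Section \rsec{prelim} (Conley's theorem, together with the fact that chain transitivity of a compact invariant set for a semiflow is inherited by each of its time-$t$ maps) the set $K$, being attractor-free for $\mathbf{S}$, is attractor-free for every $S_t$. So, fixing $t>0$, Theorem \rth{main} applies to $S_t$: either $K$ is unordered, in which case the first alternative holds and we are done, or $K=\bigcup_{0\le i<d}S_t^i(J_t)$ is a disjoint union of $d$ totally ordered smooth arcs that $S_t$ permutes cyclically, $J_t$ being a stationary p-arc for some power $S_t^{m_t}$. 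Crucially, $d$ is just the number of connected components of $K$, hence independent of $t$.

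The core of the proof is to show that, in this second alternative, every point of $K$ is an equilibrium. Since $S_t$ is strongly monotone and $J_t$ is totally ordered, $S_t^d$ restricts to a strictly order-preserving bijection of the arc $J_t$; since $J_t$ consists of fixed points of $S_t^{m_t}$ and $d$ divides $m_t$, this bijection satisfies $(S_t^d|_{J_t})^{m_t/d}=\mathrm{id}$, hence has finite order; and a finite-order strictly increasing self-bijection of an arc is the identity. Thus $S_t^d|_{J_t}=\mathrm{id}$, and, propagating around the cycle, $S_t^d|_K=\mathrm{id}$, that is, $S_{dt}|_K=\mathrm{id}$. As $t>0$ was arbitrary, this gives $S_s|_K=\mathrm{id}$ for every $s>0$: every point of $K$ is an equilibrium. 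But then, for our fixed $t$, $S_t$ fixes $J_t$ pointwise, so it cannot permute the $d$ distinct arcs cyclically unless $d=1$; hence $d=1$, and $K=J_t$ is a single smooth arc which, being a p-arc for $S_t^{m_t}$, carries a parametrization with strongly positive derivative, while $S_s(K)=K$ holds trivially for all $s\ge 0$. Therefore $K$ is a p-arc for every $S_s$ and consists of equilibria, i.e.\ a stationary p-arc for $\mathbf{S}$, which is the second alternative.

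For the last assertion, I would note that real analyticity and dissipativity pass from $\mathbf{S}$ to each $S_t$: analytic dependence on initial data makes $x\mapsto S_tx$ real analytic, and the global attractor of $\mathbf{S}$ is $S_t$-invariant, attracts uniformly along the times $nt$, and contains every $\omega$-limit point of $S_t$ because $\omega_{S_t}(x)\subseteq\omega_{\mathbf{S}}(x)$. The corresponding clause of Theorem \rth{main}, applied to $S_t$ with $K$ attractor-free, then forces $K$ to be unordered. The step I expect to be the main obstacle is the first reduction — verifying carefully that attractor-freeness (equivalently, chain transitivity) of a compact invariant set genuinely transfers from a semiflow to its time-$t$ maps — together with pinning down that the arc count $d$ does not vary with $t$; once those are secured, the collapse of $S_t^d|_{J_t}$ to the identity via order preservation and the semigroup bootstrap are routine.
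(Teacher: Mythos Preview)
Your argument is correct and takes a genuinely different route from the paper's. The paper does \emph{not} reduce to Theorem \rth{main}; instead it reruns the whole proof of Theorem \rth{main} directly in the semiflow category, replacing each discrete-time ingredient by its continuous-time analogue: Tere\v{s}\v{c}\'ak's Theorem \rth{gen} is replaced by the Smith--Thieme convergence theorem (generic trajectories converge to equilibria), Mierczy\'nski's p-arc results \rth{mier1} and \rth{parcs} by their semiflow counterparts, and the Jiang--Yu analytic obstruction by Jiang's semiflow version. Your approach, by contrast, treats Theorem \rth{main} as a black box applied to each $S_t$, then exploits two observations --- that the number $d$ of arc-components of $K$ is topological and hence $t$-independent, and that a finite-order strictly increasing self-map of an arc is the identity --- to conclude $S_{dt}|_K=\mathrm{id}$ for every $t>0$, whence every point is an equilibrium and $d=1$. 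This is more economical: you avoid restating the semiflow analogues of Mierczy\'nski's machinery and get the analytic/dissipative clause for free from the map case. The price is the lemma you correctly flag as the main obstacle, that attractor-freeness of a compact invariant set passes from the semiflow to each time-$t$ map; this is true (one shows that a proper $S_t$-attractor $A$ yields the proper $\mathbf S$-attractor $\bigcup_{0\le s\le t}S_s(A)$, using that $S_s$ commutes with $S_t$ and uniform continuity on $[0,t]\times K$), but it is not stated in Section \rsec{prelim}, so you should supply the short argument rather than cite it. Incidentally, since a semiflow-chain-transitive compact invariant set is automatically connected, you could have observed $d=1$ at the outset and shortened the middle of your argument.
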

This result has been applied to stochastic approximation and game
theory in Bena\"{\i}m \& Hirsch \cite{BH97}. 

Theorem  \rth{BH}
 takes the following form for semiflows:

\begin{theorem}		\mylabel{th:BHsemi}
Assume the semiflow $\mathbf S$ satisfies Hypothesis \rth{hyp1}, and
let $K\subset X$ be an unordered attractor-free set that is not an
upper attracting equilibrium. Then there is a lower attracting
equilibrium $q\gg K$ such that 
$ S_t y \to q$  if $x<y<q$ for some $x\in K$.
\end{theorem}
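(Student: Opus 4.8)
The plan is to deduce the theorem from its discrete analogue, Theorem~\ref{th:BH}, applied to the time-one map $S:=S_1$ (whose generic asymptotic period I call $m$), and then to upgrade the resulting $m$-periodic point to an equilibrium of $\mathbf S$. First I would verify the hypotheses of Theorem~\ref{th:BH} for $S$ and $K$. Compactness and invariance of $K$ pass from $\mathbf S$ to $S$ automatically, and ``unordered'' is a purely order-theoretic condition, so $K$ is unordered for $S$ as well. The one point needing an argument is that $K$ is \emph{attractor-free} for $S$: by the Conley characterization recalled earlier this means $S|K$ is chain transitive, and on the compact invariant set $K$ chain transitivity of $\mathbf S|K$ passes to the time-one map, since an $\epsilon$-chain for the semiflow is converted into one for $S$ by rounding its transition times up to integers and using uniform continuity of $(t,x)\mapsto S_tx$ on $K$.

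Theorem~\ref{th:BH} then presents two alternatives, and the first, that $K$ is an upper attracting $m$-periodic orbit of $S$, can be excluded. Indeed, an $m$-periodic orbit of $S$ is a finite set; being invariant under $\mathbf S$, and since $t\mapsto S_tp$ has image in this finite set, hence is constant, for each $p\in K$, every point of $K$ is an equilibrium of $\mathbf S$, so $K=\{p\}$ is a single equilibrium and $m=1$. But then upper attractivity for $S$ gives $z>p$ with $S^nz\to p$, and since $S_{n+s}z=S_s(S^nz)\to S_sp=p$ uniformly for $s\in[0,1]$ we get $S_tz\to p$; thus $p$ is an upper attracting equilibrium, contrary to hypothesis. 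Hence Theorem~\ref{th:BH} supplies a lower attracting $m$-periodic point $q$ of $S$ whose $S$-orbit $\gamma(q)=\{q,Sq,\dots,S^{m-1}q\}$ satisfies: $\gamma(q)$ majorizes $K$, $K$ minorizes $\gamma(q)$, and $\omega(y)=\gamma(q)$ with respect to $S$ whenever $x<y<u$ for some $x\in K$ and $u\in\gamma(q)$.

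The heart of the proof — and the step I expect to be the real obstacle — is to show that $q$ is an equilibrium of $\mathbf S$, equivalently that the flow orbit $\Gamma$ through $q$, which is a periodic orbit since $S_mq=q$, reduces to the point $\{q\}$. This is a genuinely continuous-time phenomenon: one must exclude a nontrivial periodic orbit lying in this ``attracting-from-below'' position relative to $K$ — note that if such orbits were admissible here, an orbitally asymptotically stable nontrivial periodic orbit would contradict the very conclusion of the theorem. I would handle it the way the passage from Theorem~\ref{th:main} to Theorem~\ref{th:semi} collapses a cyclically permuted family of stationary p-arcs to a single one: the finite $S$-orbit $\gamma(q)$ occupies the position played there by $J,SJ,\dots,S^{m-1}J$, and the analysis of Mierczy\'nski~\cite{Mi94} near an unordered attractor-free set shows that the only objects that can appear are stationary p-arcs, together with the ordering relations between $\Gamma$ and $K$, forcing the periodic orbit $\Gamma$ to be degenerate, i.e.\ a single equilibrium.

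Granting that $q$ is an equilibrium with $\gamma(q)=\{q\}$, the remaining conclusions follow by routine arguments. From ``$\gamma(q)$ majorizes $K$'' we get $K\le q$; since $K$ is not a single equilibrium, $q\notin K$ (otherwise $q$ would be comparable to every point of $K$, forcing $K=\{q\}$), so $q>x$ for every $x\in K$; then for fixed $x\in K$ we may write $x=S_\tau x'$ with $x'\in K$ and $\tau>0$ (as $S_\tau$ maps $K$ onto $K$) and apply strong monotonicity of $S_\tau$ to $q>x'$ to obtain $q=S_\tau q\gg S_\tau x'=x$; hence $q\gg K$. Lower attractivity of $q$ for $S$ provides $z<q$ with $S^nz\to q$, which upgrades as before to $S_tz\to q$, so $q$ is a lower attracting equilibrium of $\mathbf S$. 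Finally, if $x<y<q$ with $x\in K$, then $x<y<u$ with $u=q\in\gamma(q)$, so $\omega(y)=\gamma(q)=\{q\}$ for $S$, i.e.\ $S^ny\to q$, and therefore $S_ty\to q$ by the same uniform-continuity argument. Only the collapse $\Gamma=\{q\}$ of the previous paragraph requires real work.
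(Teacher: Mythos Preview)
Your reduction to the time-one map is a reasonable idea, but the paper does \emph{not} proceed this way, and the step you yourself flag as ``the real obstacle'' is a genuine gap in your argument. The paper's proof simply reruns the $m=1$ argument of Theorem~\ref{th:BH} directly for the semiflow, replacing Tere\v{s}\v{c}\'ak's Theorem~\ref{th:gen} by the Smith--Thieme Theorem~\ref{th:gen2}. The point of that substitution is that Smith--Thieme produces \emph{equilibria} of $\mathbf S$, not merely fixed points of some $S_t$; one then takes $q$ to be the smallest equilibrium with $q\gg K$, and the rest follows as in Section~\ref{sec:prfBH}. No ``upgrade from periodic point to equilibrium'' is ever needed.

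In your route that upgrade is unavoidable, and the paragraph you devote to it is not a proof. Mierczy\'nski's p-arc machinery concerns totally ordered arcs of fixed points and their contracting collars; it says nothing to the effect that ``near an unordered attractor-free set the only objects that can appear are stationary p-arcs,'' and the analogy with the collapse $J,SJ,\dots,S^{m-1}J\mapsto J$ in the passage from Theorem~\ref{th:main} to Theorem~\ref{th:semi} does not transfer: there the arcs were already stationary for $S^m$ and the collapse is forced because a semiflow has no nontrivial finite cyclic symmetry on a connected arc of equilibria, whereas here you are trying to rule out a genuine nontrivial periodic orbit $\Gamma$ of $\mathbf S$. You also miss a key simplification that would make your strategy workable: by Theorem~\ref{th:gen2} a dense open set of points has $S_1$-orbit converging to a fixed point of $S_1$, so the generic asymptotic period of $S_1$ is $m=1$. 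With $m=1$ your $q$ is already a fixed point of $S_1$, and then the needed step is to show $S_tq=q$ for all $t$; this follows once one knows (from the \emph{proof} of Theorem~\ref{th:BH}, not its statement) that $q$ is the \emph{minimal} $S_1$-fixed point with $q\gg K$: for $0<t<1$ the point $S_tq$ is again an $S_1$-fixed point with $S_tq\gg S_tK=K$, so $S_tq\ge q$, and applying the same to $1-t$ gives $q=S_1q\ge S_tq$. But notice that this argument already uses Smith--Thieme and the internal minimality from Section~\ref{sec:prfBH}, which is exactly the content of the paper's direct proof; the detour through $S_1$ buys nothing.

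A smaller issue: your argument that $q\notin K$ (``otherwise $K=\{q\}$'') does not by itself yield a contradiction, since the hypothesis allows $K$ to be a single equilibrium that is merely not upper attracting. The paper avoids this by producing $q\gg K$ from the outset.
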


\paragraph{Application to invariant measures.}   \mylabel{sec:invmeas}
Before beginning the proof of the main theorems, we use them to
investigate invariant measures.

An {\em invariant measure} $\mu$ for $S$ is a Borel probability
measure on $X$ with compact support such that $\mu (U)=\mu (S^{-1}U)$
for every $\mu$-measurable set $U$.  The {\em support} $\supp (\mu)$ of
$\mu$ is the complement in $X$ of the union of all open sets $U$ such that
$\mu (U)=0$.  Notice that $\supp (\mu)$ is an invariant set.  If $\mu$ is
invariant for every map in a semiflow $\SS$, then $\mu$ is called an
invariant measure for $\SS$.  An invariant measure is {\em ergodic} if
every measurable invariant set has measure zero or one.

Chain recurrence is defined in Section \rsec{prelim} below.

\begin{proposition}		\mylabel{th:meas}
Let $K$ be the support of an invariant measure $\mu$.  Then every
point of $K$ is chain recurrent for $S|K$.  If $\mu$ is ergodic, $K$ is
attractor-free.
\end{proposition}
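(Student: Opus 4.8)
The plan is to use the $S$-invariance of $\mu$ to show that $\mu$ gives no mass to the ``transient'' part of $K$, and then to use ergodicity to forbid any splitting of $K=\supp(\mu)$ by an attractor. Since $\mu(K)=1$ I work throughout with the measure-preserving system $(K,S|K,\mu)$, where $K$ is a compact metric space, $S|K$ a continuous self-map (as $S(K)=K$), and $\mu$ a Borel probability on $K$ with $(S|K)_*\mu=\mu$ and full support; so Conley's theory \cite{Co78} applies to $S|K$, and every nonempty relatively open subset of $K$ has positive $\mu$-measure.

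\emph{Every point of $K$ is chain recurrent for $S|K$.} If not, by Conley's theory there is an attractor $A$ for $S|K$ with $x\in B(A)\setminus A$ for some $x$, where $B(A)$ is the basin of $A$; since $B(A)$ is relatively open and $A$ compact, $B(A)\setminus A$ is a nonempty relatively open subset of $K$. I would fix a trapping neighborhood $V$ of $A$: relatively open, $S(\clos V)\subset V$, $A=\bigcap_{n\ge0}S^n(\clos V)$, $B(A)=\bigcup_{n\ge0}S^{-n}(V)$. Since $S(\clos V)\subset V$, the sets $S^n(\clos V)$ decrease to $A$; and $\clos V\subset S^{-n}(S^n(\clos V))$ together with invariance of $\mu$ gives $\mu(S^n(\clos V))=\mu\bigl(S^{-n}(S^n(\clos V))\bigr)\ge\mu(\clos V)\ge\mu(V)\ge\mu(A)$, so these are all equal and $\mu(A)=\mu(V)$. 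Dually $V\subset S^{-1}(V)$, so $S^{-n}(V)$ increases to $B(A)$ and $\mu(B(A))=\lim_n\mu(S^{-n}(V))=\mu(V)$. Hence $\mu\bigl(B(A)\setminus A\bigr)=\mu(B(A))-\mu(A)=0$, contradicting that $B(A)\setminus A$ is a nonempty relatively open subset of $\supp(\mu)=K$. (Equivalently, one can feed Conley's continuous complete Lyapunov function $g\colon K\to\RR$ --- which has $g\circ S\le g$, strictly outside the chain recurrent set --- into $\int(g-g\circ S)\,d\mu=0$ to see $\mu$ is carried by the closed chain recurrent set.)

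\emph{The ergodic case.} Suppose $\mu$ is ergodic but $S|K$ has a proper attractor $A\subsetneq K$, with $V$, $B(A)$ as above. From $B(A)=\bigcup_{n\ge0}S^{-n}(V)$ and $V\subset S^{-1}(V)$ one gets $(S|K)^{-1}(B(A))=B(A)$, so $B(A)$ is an invariant Borel subset of $K$, and ergodicity forces $\mu(B(A))\in\{0,1\}$. If $\mu(B(A))=0$ then $B(A)$ --- nonempty, since it contains $A\ne\emptyset$ --- is a relatively open subset of $K$ of $\mu$-measure $0$. If $\mu(B(A))=1$ then, by the computation above, $\mu(A)=\mu(V)=\mu(B(A))=1$, so $K\setminus A$ is a nonempty relatively open subset of $K$ of $\mu$-measure $0$. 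Both cases contradict $\supp(\mu)=K$. Hence $S|K$ has no proper attractor, i.e.\ $K$ is attractor-free.

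\emph{Main obstacle.} The step needing real care is the measure lemma: an invariant measure gives zero mass to the transient set $B(A)\setminus A$ of an attractor. It depends on choosing an honest trapping neighborhood $V$ and on exploiting invariance of $\mu$ under the \emph{map} $S$ (not a homeomorphism), which forces one to push $\clos V$ forward and pull $V$ back --- and to be careful about which inclusions among $V$, $\clos V$, $S^{\pm n}$ of them actually hold. Everything else is standard bookkeeping with Conley's attractor/basin structure and, in the ergodic case, with invariance of the basin.
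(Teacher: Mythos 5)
Your proof is correct, but it takes a genuinely different route from the paper's on both halves. For the chain recurrence claim, the paper does not go through Conley's attractor--basin structure at all: it applies Poincar\'e's recurrence theorem \cite{NS56} to conclude that recurrent points are dense in $K=\supp(\mu)$, and then uses the fact that the (closed) chain recurrent set contains all recurrent points. Your argument instead invokes Conley's characterization of non-chain-recurrent points as lying in $B(A)\setminus A$ for some attractor $A$, and kills that set with the measure computation $\mu(A)=\mu(\clos V)=\mu(V)=\mu(B(A))$ coming from invariance and a trapping neighborhood; your parenthetical Lyapunov-function variant is an equally valid shortcut. For the ergodic claim, the paper takes a compact neighborhood $N$ of the attractor with $S(N)\subset \Int_K N$ and $\bigcap_j S^j(N)=A$, uses full support and invariance to force $N=S(N)=A$, so that $A$ is relatively open as well as closed, and only then applies ergodicity; you instead apply ergodicity directly to the basin $B(A)$, which you correctly observe is exactly $(S|K)^{-1}$-invariant, and derive a contradiction with full support in either case $\mu(B(A))\in\{0,1\}$. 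The trade-offs: the paper's first half is more elementary (no Conley decomposition needed) and actually yields the stronger fact that recurrent points are dense in $K$, whereas your treatment unifies both halves around one measure lemma (invariant measures do not charge $B(A)\setminus A$), fits naturally with the attractor-free/chain-transitive dictionary the paper already uses, and avoids the slightly delicate bookkeeping in the paper's ergodic step (where ``$N\setminus S(N)$ is open'' really should be read as $\Int_K N\setminus S(N)$). Both proofs rest on the same standard inputs --- existence of trapping neighborhoods for attractors of continuous maps on compact metric spaces and the full-support fact that nonempty relatively open subsets of $\supp(\mu)$ have positive measure --- so your appeal to Conley's theory for maps is at the same level of rigor as the paper's own citations.
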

\begin{proof}
We rely on the fact that $\mu (Q)-0$ for every nonempty, relatively open set
$Q\subset \supp (\mu)$.  Let 
$R\subset K$ denotes closure of the  set of
recurrent points in $K$.  If $K\setminus R\ne \emptyset$ then $\mu
(K\setminus R)>0$.  But then $K\setminus
R$ carries an invariant measure, and thus contains a recurrent point
by Poincar\'e's recurrence theorem (Nemytskii \& Stepanov \cite{NS56},
Chapter VI, Theorem 3.05).  This contradiction shows every point of
$K$ is recurrent, and therefore chain recurrent.

 Suppose $\mu$ is ergodic and let $A\subset K$ be an attractor for
$S|K$.  Then, as is well known, every neighborhood of $A$ in $K$
contains a compact neighborhood $N\subset K$ of $A$ such that
$S(N)\subset \Int_K N$ and $\bigcap_{j\ge 0}S^j (N)=A$.
The set $N\setminus S (N)$, being open in $K$ and disjoint from its
image, has measure $0$.  Therefore $S(N)\setminus N=\emptyset$, so
that $N=A$.  Therefore both $A$ and $K\setminus A$ are invariant and
open in $\supp (\mu)$.  
Then $\mu(A)>0$ since $A$ is nonempty.
If $K\setminus A$ is nonempty it has positive measure by the same
argument; but this contradicts ergodicity.  Therefore $A=K$.
\end{proof}

The topological entropy of a dynamical system is a much studied
numerical invariant.  While the definition is too complicated to give
here (see e.g.\  Katok \& Hasselblatt \cite{KH95}), it can be noted that
positive entropy is often used as a criterion for chaos.  Conversely,  zero
entropy suggests nonchaotic behavior.  Rigorously, zero entropy
implies that the system does not contain a subsystem dynamically
equivalent to a Smale horseshoe.  

Under Hypotheses  \rth{hyp0} or  \rth{hyp1} we have the following result:
\begin{theorem}		\mylabel{th:ent1}
Assume $E=\R 3$ and $\SS$ is a flow (respectively, $E=\R 2$ and $S$ is
a diffeomorphism).  Then $\SS$ (respectively, $S$) has topological
entropy $0$.
\end{theorem}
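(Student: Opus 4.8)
The plan is to reduce the entropy computation to the structural description of attractor-free sets provided by Theorems \ref{th:main} and \ref{th:semi}, using the Conley decomposition into chain transitive components. Recall that topological entropy is carried by the chain recurrent set: $h(S) = h(S|\mathcal{R})$ where $\mathcal{R}$ is the chain recurrent set, and $\mathcal{R}$ decomposes into chain transitive classes, each of which is attractor-free. So it suffices to bound the entropy of $S$ (or $S_t$) restricted to an individual attractor-free set $K$, and then observe that the entropy of the whole system is the supremum over these pieces (or, more carefully, that the finitely-many-pieces-at-a-time structure controls the total). First I would invoke Theorem \ref{th:main} (for the diffeomorphism case on $\R 2$) or Theorem \ref{th:semi} (for the flow case on $\R 3$): every attractor-free $K$ is \emph{either} unordered \emph{or} a stationary p-arc (union of its iterates). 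In the stationary case $K$ consists of periodic (resp.\ equilibrium) points, so $S|K$ is a periodic homeomorphism of an arc and has zero entropy trivially. So the real work is the unordered case.

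For the unordered case the strategy is dimensional: an unordered set in $\R n$ is contained in a Lipschitz graph over an $(n-1)$-dimensional coordinate hyperplane — indeed, if $e \gg 0$, an unordered set meets each line in direction $e$ in at most one point, and standard facts about ordered Euclidean spaces show the projection along $e$ onto the hyperplane $e^\perp$ (or a complementary coordinate hyperplane) is injective with Lipschitz inverse. Hence for $E = \R 3$ an unordered set $K$ is topologically a subset of a $2$-manifold and the restricted flow $\SS|K$ is (conjugate to) a flow on a compact subset of $\R 2$; for $E = \R 2$ an unordered invariant set for the diffeomorphism $S$ sits inside a Lipschitz arc, i.e.\ a compact subset of $\R 1$. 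Then I would apply the classical entropy bounds: a continuous flow on a compact subset of the plane has zero topological entropy — this is a consequence of the Poincar\'e--Bendixson theory together with the fact that surface flows have zero entropy (the time-one map is an area-type map on a $2$-complex with no horseshoes) — and a homeomorphism of a compact subset of $\R$, being essentially monotone on each component, has zero entropy. Since each chain transitive piece contributes zero and the total entropy equals the maximum over the (spectral-decomposition) pieces, $h(\SS) = 0$ (resp.\ $h(S) = 0$).

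The main obstacle I anticipate is making the reduction ``entropy of the whole system $=$ supremum of entropies over chain transitive components'' fully rigorous in this infinite-dimensional-looking but actually finite-dimensional setting, and in particular handling the flow case cleanly: one must know that the chain recurrent set of a flow on $\R 3$ carries all the entropy and that it has only finitely many ``levels'' of attractor-blocks, so that $h = \max$ rather than merely $h \ge \sup$. The cleanest route is Conley's theorem (already cited in the excerpt) giving a filtration of $\R 3$ (or the global attractor, if we first pass to a dissipative reduction via order compactness) by attractor blocks whose successive differences retract onto chain transitive sets; entropy of a flow is the max over such a filtration. The other delicate point is the surface-flow entropy fact: I would cite it rather than reprove it, noting that a flow on a compact subset of an orientable surface has zero entropy because its nonwandering dynamics is a union of fixed points, periodic orbits, and connecting orbits (Poincar\'e--Bendixson), none of which support positive entropy, and that entropy is determined by the nonwandering set.

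Finally, for the $\R 2$ diffeomorphism case one must be slightly careful that ``unordered $\Rightarrow$ subset of a Lipschitz arc'' really forces zero entropy of $S|K$: the point is that $S|K$ then preserves the linear order induced on the arc by the cone, so $S|K$ is an order-preserving homeomorphism onto its image inside an interval, and order-preserving interval maps have zero entropy. I expect the bookkeeping — keeping track of the finitely many $S$-iterates of $J$ in the p-arc case, and of orientation/order on components in the unordered case — to be routine once the two structural dichotomies from Theorems \ref{th:main} and \ref{th:semi} are in hand, so the substance of the argument is entirely the Conley reduction plus the two classical low-dimensional zero-entropy facts.
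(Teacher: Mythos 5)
Your proposal follows the same skeleton as the paper (reduce to attractor-free sets, apply the dichotomy of Theorems \rth{main} and \rth{semi}, dismiss the stationary p-arc case, and finish with a low-dimensional zero-entropy fact), but both of the two steps you yourself flag as delicate are genuine gaps, and in each case the paper's proof uses a different device precisely to avoid them. First, the reduction. You want ``entropy $=$ max over chain transitive pieces'' via a finite Conley filtration of attractor blocks, but Theorem \rth{ent1} does not assume dissipativity: $X$ is a noncompact open set, the chain recurrent set need not be compact, and there may be infinitely many attractors (e.g.\ infinitely many fixed points), so no finite filtration exists and the ``$h=\max$'' bookkeeping you describe is not available. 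The paper sidesteps this entirely by working measure-theoretically: topological entropy is the supremum of the measure-theoretic entropies of ergodic invariant measures \cite{Go69}, each such measure has compact support, Proposition \rth{meas} shows that support is attractor-free, and ergodicity rules out alternative (b) of Theorem \rth{main}. Your reduction can be repaired along the same lines (every ergodic measure sits inside a single chain class), but then it is the measure-theoretic argument, not the filtration argument.

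The more serious gap is in the unordered case. Projecting an unordered $K$ along a direction $e\gg 0$ only realizes $S|K$ (resp.\ $\SS|K$) as a system on a compact \emph{subset} of the line (resp.\ of the plane), and the classical facts you invoke are about \emph{manifolds}: a homeomorphism of a compact subset of $\RR$ can be a full shift on a Cantor set, which has positive entropy, and Poincar\'e--Bendixson theory and Young's theorem \cite{Yo77} apply to flows on planar domains or compact surfaces, not to flows defined only on a compact planar set. Your attempted fix --- that $S|K$ preserves the transverse order induced on the Lipschitz graph --- is asserted, not proved, and is false as stated: the strongly positive linear map $S(x_1,x_2)=(x_1+2x_2,\,2x_1+x_2)$ reverses the transverse order on its invariant unordered segment $\{(t,-t):|t|\le 1\}$; order reversal is still harmless for entropy, but excluding ``mixed'' behavior on a possibly disconnected $K$ is exactly the content of the nontrivial invariant-manifold theorems. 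This is why the paper cites Hirsch \cite{Hi88a} and Tak\'a\v{c} \cite{Ta90}: an unordered attractor-free (or measure-supporting) set lies in an invariant surface in the flow case, and in an invariant $1$-manifold in the planar diffeomorphism case, so the restricted dynamics extends to a genuine surface flow, resp.\ a homeomorphism of a $1$-manifold, and only then do Young's theorem and Adler--Konheim--McAndrew \cite{AKM65} give entropy $0$. Without that extension step your argument does not close.
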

\begin{proof}
 The topological entropy is the supremum of the measure theoretic
entropies of ergodic invariant measures (Goodwyn \cite{Go69}).
Therefore it suffices to prove that every ergodic invariant measure
$\mu$ for $\SS$ (or $S$) has measure theoretic entropy $0$.

Consider a flow $\SS$ in $X\subset \R 3$.   Then $K$ is
attractor-free by Proposition \rth{meas}, and Theorem \rth{main}
applies.  It is easy to see that ergodicity precludes conclusion (b)
of Theorem \rth{main}, so $K$ is unordered.  Therefore $K$ lies in an
invariant surface  (Hirsch \cite{Hi88a}, Tak\'a\v{c} \cite{Ta90}).
Since every surface flow has entropy $0$ (Young \cite{Yo77}), the
proof for flows is complete.

The proof for a diffeomorphism $S$ in $X\subset \R 2$ is similar:
Theorem \rth{semi} implies $K$ is unordered, $K$ lies in an invariant
$1$-manifold, and every homeomorphism of a $1$-manifold has entropy $0$
(Adler {\em et al.\ }\cite{AKM65}).
\end{proof}

\section{Preliminaries}   \mylabel{sec:prelim}

\paragraph{Chain equivalence.} 
Let  $T:Y\to Y$ be a continuous map in a metric space.
Let $u,v\in Y$.  We say $u$ {\em $\eps$-chains to $v$},  written 
$u\leadsto_\eps v$, 
if there exist a
number $m\in \Np$ (the set of positive natural numbers) and a finite
sequence in $Y$ of the form
\[u=y_0,\ldots, y_m=v\]
 such that
\[ ||T y_{i-1}- y_{i}|| < \eps,\quad i=1,\ldots,m \]
The $m+1$-tuple $(y_1,\ldots,y_n)$ is an {\em $\eps$-chain.}
If $u\leadsto_\eps v $ for every $\eps >0$ then $u$ {\em chains to
$v$}, denoted by $u\leadsto v$. 
If $u\leadsto v $ and $v\leadsto u $ then $u$ and $v$ are {\em chain
equivalent}, written $u\approx v$.  If  $u\leadsto u $  then $u$ is
{\em chain recurrent}.  
Define
\[\Omega (u, T)=\{v:u\leadsto v\}\]
This set is closed and forward invariant;  when $u$ is chain
recurrent, it is invariant. 

 The binary relation of
chain equivalence is symmetric and transitive, and reflexive on the
the set $CR(T)$ of chain recurrent points of $T$.  This closed invariant
set contains the nonwandering set, all homoclinic and heteroclinic
cycles, and all supports of invariant measures.

If every point is chain recurrent, then $T$ is a {\em chain recurrent
map}.  If $Y= CR(T)$ then we say $Y$ is {\em chain transitive}.  When
$Y$ is compact, this is equivalent to $Y$ being attractor-free by a
theorem of Conley \cite{Co78} for semiflows and its analogue for
maps. 

Now consider a semiflow $\mathbf T=\{T_t\}_{0\le t<\infty}$ in $Y$.
For $R>0$ and $\eps >0$ we say $u$ {\em $(R,\eps)$-chains to $v$},
written $u\leadsto_{R,\eps} v,$ if there exists a natural number $m\ge
1$, real numbers $t_1,\ldots,t_r\ge R$, and a finite sequence in $Y$
of the form
$v=y_0,\ldots, y_n=u$
 such that
\[||T_{t_i} y_{i-1}-y_{i}||<\eps\quad i=1,\ldots,m\]
If $u\leadsto_{R,\eps} v $ for every  $R>0,\eps >0$ then $u$ {\em chains to
$v$}, denoted by $u\leadsto v$.  If $u\leadsto u$ then $u$ is {\em
chain recurrent for $\mathbf T$.} 

The definitions of chain recurrence, chain equivalence and chain
transitivity for semiflows are  analogous to those for maps.  When  
$Y$ is compact, chain transitivity is equivalent to attractor-free.

\paragraph{Monotone convergence and p-arcs.}
We return to the map $S:X\to X$ satisfying Hypothesis \rth{hyp0}.

A point $x$ is {\em convergent} if $\om x$ is a singleton (necessarily
a fixed point).

Suppose $\om x =p$. We say $\ga x$ {\em eventually decreases to $p$}
and write $\ga x \searrow p$ provided there exists $n\ge 0$
such that $S^{n} x > S^{n+1} x$, in which case
\[S^kx \gg S^{k+1}x\gg  p\:\: \mbox{for all $k\ge n$}.\]
 If $S^{n} x < S^{n+1} x$ for some $n\ge 0$, we say $\ga x$ {\em
eventually increases to $p$} and write $\ga x \nearrow p$.  When $\ga
x \searrow p$ or $\ga x \nearrow p$, we say $\ga x$ is {\em eventually
monotone} and converges {\em eventually monotonically}.

\begin{theorem}[\sc  Mierczy\'nski]
\mylabel{th:mier1} 
Let $p$ be a fixed point.  The set of points whose orbits converge to
$p$, but not monotonically, is unordered.
\end{theorem}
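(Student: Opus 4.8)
The plan is to argue by contradiction: suppose $x$ and $y$ are distinct points whose orbits both converge to $p$ non-monotonically, and suppose $x < y$. The first step is to exploit strong monotonicity to upgrade the strict inequality: since $S$ is strongly monotone, $S x \ll S y$, and more generally $S^n x \ll S^n y$ for all $n \ge 1$. So after discarding the zeroth iterate we may assume $x \ll y$, and in fact $S^n x \ll S^n y \to p$ from the hypothesis on omega limit sets. The heart of the matter is that a strongly monotone map forces \emph{some} kind of monotonicity on orbits trapped between $x$ and $y$; the key tool will be to look at the order interval $[x, y] \subset X$ (legitimate because $X$ is order convex) and the behavior of $S$ near the fixed point $p$.

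Next I would bring in the linearization. Because $\om x = \om y = p$, the orbits of both points eventually enter any prescribed neighborhood of $p$; on such a neighborhood $S$ is well-approximated by the strongly positive, completely continuous operator $L = DS(p)$. By Krein–Rutman applied to $L$ (using strong positivity and complete continuity, Hypotheses \ref{th:hyp0}(iv),(v)), $L$ has a simple leading eigenvalue $\lambda > 0$ with eigenvector $e \gg 0$, and the rest of the spectrum lies in a strictly smaller disk. The standard consequence — this is exactly the mechanism behind Mierczy\'nski's and Polá\v{c}ik's convergence theory — is that for a point $z$ whose orbit tends to $p$, the direction $(S^n z - p)/\|S^n z - p\|$ converges to $\pm e$, with the sign eventually constant, \emph{unless} $S^n z = p$ for some $n$. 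The dichotomy "$+e$ versus $-e$" is precisely the dichotomy "$\ga z$ eventually decreases to $p$" versus "$\ga z$ eventually increases to $p$": approaching along $+e \gg 0$ means $S^{n}z - p \gg 0$ and $S^{n}z - S^{n+1}z$ has the sign of $(1-\lambda)$-scaled $e$, giving eventual monotonicity. Hence a non-monotone convergent orbit must actually reach $p$: there is $N$ with $S^N x = p$, and likewise $S^{N'} y = p$.

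But now strong monotonicity gives the contradiction directly. Take $n \ge \max(N, N')$ with $n \ge 1$; then $S^n x = p = S^n y$, while $x < y$ forces $S^n x \ll S^n y$, i.e. $p \ll p$, absurd. (If one prefers to avoid the claim that the orbit literally hits $p$, the same contradiction can be extracted one step earlier: the asymptotic direction of $S^n(y) - S^n(x)$ along the positive orbit is governed by the same leading eigenvector $e$, so $\ga x$ and $\ga y$ have the \emph{same} eventual monotonicity type as their difference, which is eventually increasing toward $0$ in the cone — forcing each of $\ga x$, $\ga y$ to be eventually monotone, contrary to hypothesis.) Either way, no comparable pair of non-monotone convergent orbits can exist, so the set in question is unordered.

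The main obstacle is making the spectral/asymptotic-direction step rigorous: one needs the precise statement that an orbit converging to $p$ either is eventually monotone or else is "tangent" to the leading eigendirection only in a degenerate way that still forces monotonicity, and this requires invariant-manifold or Lyapunov-functional input near $p$ rather than a soft argument. I expect the cleanest route is to quote the relevant local result from Mierczy\'nski \cite{Mi94} (or Polá\v{c}ik, or the Poincaré–Bendixson-type machinery for strongly monotone maps) as a black box: namely that convergence to a fixed point along a non-principal direction is impossible for an orbit of a strongly monotone $C^1$ map unless the orbit is eventually equal to $p$. Granting that, the theorem reduces to the two-line contradiction via strong monotonicity above.
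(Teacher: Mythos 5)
There is a genuine gap, and it sits exactly at the step you yourself flag as the "main obstacle." Your argument hinges on the claim that an orbit converging to $p$ must approach along the principal eigendirection $\pm e$ of $DS(p)$ unless it eventually equals $p$, so that non-monotone convergence forces $S^Nx=p$ and the contradiction follows from $S^nx\ll S^ny$. That claim is false, and the main line of the proof collapses with it. The set of points converging to $p$ non-monotonically is typically a nonempty codimension-one set approaching along non-principal directions: the paper's own Lemma \ref{th:leaves} produces hypersurfaces ${\cal L}(p)$ tangent to $E_2(p)$ all of whose points converge to $p$, and such orbits in general never hit $p$ and are not eventually monotone. A concrete counterexample to your black box: let $S$ be the linear map of ${\bf R}^2$ with matrix $A=\left(\begin{smallmatrix}0.5&0.4\\ 0.4&0.5\end{smallmatrix}\right)$, which has strictly positive entries (so Hypothesis \ref{th:hyp0} holds with $X={\bf R}^2$), eigenvalues $0.9$ and $0.1$, principal eigenvector $e=(1,1)\gg 0$ and second eigenvector $f=(1,-1)$. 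Then $S^nf=(0.1)^nf\to 0$, the orbit never equals the fixed point $0$, and $S^nf-S^{n+1}f=0.9\,(0.1)^n f$ is never comparable to $0$, so this orbit converges non-monotonically along a non-principal direction. This is precisely why the theorem asserts that the set in question is \emph{unordered}, not that it is empty or consists of preimages of $p$.

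The parenthetical fallback has a separate non sequitur. Even granting that for $x<y$ with $\omega(x)=\omega(y)=\{p\}$ the normalized difference $(S^ny-S^nx)/\|S^ny-S^nx\|$ aligns with $e$ (this needs the Pol\'a\v{c}ik--Tere\v{s}\v{c}\'ak exponential separation along the orbit, not merely Krein--Rutman at $p$), alignment of the \emph{difference of the two orbits} says nothing direct about comparability of $S^nx$ with $S^{n+1}x$; converting it into eventual monotonicity of at least one of the individual orbits is exactly the content of Mierczy\'nski's Proposition 2.1, and is delicate precisely in the non-hyperbolic case $\rho(p)=1$ relevant to the stationary p-arcs of this paper. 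Note finally that the paper gives no argument of its own here: its entire proof is the citation ``Follows from Proposition 2.1 of \cite{Mi94}.'' So quoting Mierczy\'nski as a black box is indeed the intended route, but the statement you propose to quote is false; you would need to invoke Proposition 2.1 itself (or supply the difference-to-orbit monotonicity transfer), at which point the theorem is immediate and your spectral preliminaries are not needed.
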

\bpr Follows from Proposition 2.1 of \cite{Mi94}.
\epr

For any $x\in X$ let $\rho (x)$ denote the spectral radius of the
linear operator $L_x=DS (x):E\to E$.  The Krein-Rutman theorem
(Deimling \cite{De80}) implies $\rho (x)$ is a simple eigenvalue of
$L_x$, whose one-dimensional eigenspace, called the {\em principal
eigendirection} $E_1(x)$, is spanned by a vector $\gg 0$.  There is a
direct sum decomposition $E=E_1 (x)\oplus E_2 (x)$ invariant under
$L_x$ such that $L_x|E_2 (x)$ has spectral radius $\rho_2(x)<\rho(x)$.
Moreover $E_2 (x)\cap\Ep =\{0\}$.

It is easy to see that if $x$ belongs to a stationary p-arc then $\rho
(x)=1$.
 
\begin{theorem}
[\sc  Mierczy\'nski]\mylabel{th:parcs}
{~}
\begin{description}

\ita Assume $p,q$ are fixed points with $p<q$ and the set of fixed points
in $[p,q]$ is compact.  Then there is a p-arc whose endpoints are $p$
and $q$.

\itb If $x$ is a fixed point in a p-arc $J$ then the tangent space
$T_x J$ is the principal eigendirection  $E_1(x)$. 

\end{description}
\end{theorem}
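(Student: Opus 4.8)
I would prove (b) first, since it feeds into the gluing step of (a).

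\emph{Plan for (b).} Parametrize $J$ by its defining $C^1$ embedding $h:[0,1]\to E$, $h'\gg0$, and let $t_0$ be the parameter with $h(t_0)=x$. Since $h'$ never vanishes, $h$ is a $C^1$ diffeomorphism onto the $C^1$ submanifold $J$, and $S|J$ is a bijection of $J$ (onto since $S(J)=J$; injective since strong monotonicity makes $S$ injective on the totally ordered set $J$), so $\sigma:=h^{-1}\circ S\circ h$ is a $C^1$ order-preserving self-homeomorphism of $[0,1]$ with $\sigma(t_0)=t_0$. Differentiating $S(h(t))=h(\sigma(t))$ at $t_0$ yields $DS(x)h'(t_0)=\sigma'(t_0)h'(t_0)$, so $v:=h'(t_0)$ is an eigenvector of $DS(x)$ with $v\gg0$. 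I then decompose $v=v_1+v_2$ along $E=E_1(x)\oplus E_2(x)$; by invariance of the splitting $v_1,v_2$ are eigenvectors (or $0$) for the eigenvalue $\sigma'(t_0)$. If $v_1=0$ then $v=v_2\in E_2(x)\cap\Ep=\{0\}$, impossible; so $v_1\neq0$, forcing $\sigma'(t_0)=\rho(x)$, and then $v_2$ would be an eigenvector of $DS(x)|E_2(x)$ for $\rho(x)>\rho_2(x)$, which is impossible, so $v_2=0$ and $T_xJ=\RR v=E_1(x)$. The same computation handles an endpoint $x$ via one-sided derivatives. This is the short, clean part.

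\emph{Plan for (a).} The idea is to build the p-arc out of two kinds of pieces and splice them. First, by Zorn's lemma I pick a maximal totally ordered set $C\subset\mathrm{Fix}(S)\cap[p,q]$ with $p,q\in C$; since the closure of a chain is a chain (the cone is closed), maximality makes $C$ closed, hence compact by hypothesis. For a \emph{gap pair} $a<b$ in $C$ (no point of $C$ strictly between) there is in fact no fixed point strictly between — any such $r$ is comparable with all of $C$, so maximality would put $r\in C$. I then want a connecting p-arc $J_{a,b}$ across each gap pair: strong monotonicity gives $a=Sa\ll Sz\ll Sb=b$ for $a<z<b$, so $[a,b]$ is forward invariant, and having no fixed point other than $a,b$ it carries a monotone full orbit joining $a$ and $b$ (connecting-orbit theory for strongly monotone maps, cf.\ Dancer \& Hess \cite{DH91}); combined with invariant-manifold theory at the two ends — the spectral gap $\rho_2(\cdot)<\rho(\cdot)$ gives a one-dimensional $C^1$ invariant manifold tangent to the principal direction there — this produces a $C^1$, $S$-invariant arc $J_{a,b}$ from $a$ to $b$ with tangent $\gg0$ throughout, meeting $a$ and $b$ tangentially to $E_1(a)$, $E_1(b)$; so $J_{a,b}$ is itself a p-arc, in agreement with (b). Second, the part of the arc lying in $C$: if $c\in C$ is not isolated in $C$ and $c_n\to c$ in $C$ with $c_n\neq c$, then $Sc_n=c_n$, $Sc=c$ and the $C^1$-property of $S$ give $(DS(c)-I)w_n\to0$ for the unit secants $w_n=(c_n-c)/\|c_n-c\|$; complete continuity of $DS(c)$ (Hypothesis \rth{hyp0}) makes $\{DS(c)w_n\}$ precompact, so along a subsequence $w_n\to z$ with $DS(c)z=z$ and $z\in(\Ep\cup(-\Ep))\setminus\{0\}$, whence $z$ spans $E_1(c)$ by the argument of (b); as all limit directions agree up to sign, $C$ has a continuously varying tangent line $E_1(c)$ at each non-isolated point. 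Finally I splice the local $C^1$ parametrizations of the $J_{a,b}$'s and of $C$: at every shared fixed point the one-sided tangents from both sides agree, being the principal eigendirection there by (b), and $x\mapsto E_1(x)$ is continuous (the principal eigenvalue is simple), so since the pieces are totally ordered and consistently ordered they patch to a single $C^1$ embedding $h:[0,1]\to E$ with $h'\gg0$ — a p-arc from $p$ to $q$.

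\emph{Where the difficulty lies.} I expect the main obstacle to be the $C^1$-regularity of the connecting pieces $J_{a,b}$ up to and including their endpoints: that the relevant one-dimensional invariant manifold stays totally ordered, actually reaches the opposite fixed point rather than some intermediate invariant set, and meets it $C^1$-tangentially — in the degenerate case $\rho=1$ this becomes a center-manifold analysis. It is precisely here, together with the linearization estimate along $C$, that the compactness of $\mathrm{Fix}(S)\cap[p,q]$ and the complete continuity of the derivatives are essential; this is the technical heart of Mierczy\'nski's construction \cite{Mi94}. By contrast, once (b) has pinned the tangent at every fixed point to $E_1$, the bookkeeping of splicing the pieces into one embedding is routine.
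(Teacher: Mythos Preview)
The paper does not prove this statement at all: its entire proof is the two-line citation ``Part (a) is proved in Theorem 3.16 of \cite{Mi94}. Part (b) is Lemma 3.6 of \cite{Mi94}.'' So there is no approach in the paper to compare yours against; what you have written is an attempted reconstruction of Mierczy\'nski's original argument.

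Your argument for (b) is correct and essentially complete. The only point worth tightening is the claim that $\sigma=h^{-1}\circ S\circ h$ is $C^1$: this uses that $h$ is a $C^1$ diffeomorphism onto the $C^1$ submanifold $J$ (so $h^{-1}$ is $C^1$ on $J$) and that $S|J:J\to J$ is $C^1$ because $S$ is $C^1$ on the ambient space and $S(J)=J$. With that said, the eigenvector calculation and the use of $E_2(x)\cap E^+=\{0\}$ together with $\rho_2(x)<\rho(x)$ are exactly the right ingredients.

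Your outline for (a) is a plausible roadmap --- maximal chain of fixed points, connecting orbits across gaps, invariant-manifold theory at the ends, splice by matching principal eigendirections --- and you are candid that the hard step is the $C^1$ regularity of the connecting arcs up to their endpoints, particularly when $\rho=1$. That is indeed where the substance of \cite{Mi94} lies, and your sketch does not supply it; you are effectively deferring back to the same source the paper cites. One further point you gloss over: showing that the maximal chain $C$ itself admits a $C^1$ parametrization (not merely that secant directions at non-isolated points accumulate on $E_1$) is nontrivial when $C$ is, say, a Cantor set of fixed points --- the gap arcs then have to do all the work of producing a globally $C^1$ embedding, and the uniformity needed for $h'$ to be continuous across accumulation points of gaps requires a quantitative estimate, not just a pointwise one. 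So as a proof your (a) is incomplete in precisely the places you flag, plus this gluing-regularity issue; as an outline of what has to be done it is reasonable.
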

\bpr
Part (a) is proved in Theorem 3.16 of \cite{Mi94}. Part (b) is Lemma 3.6
of  \cite{Mi94}.
\epr

Let $J$ be a stationary p-arc.  There is a continuous family of bounded,
$C^1$ hypersurfaces $\{{\cal L}(x)\}_{x\in J}$ with the following
properties (see Proposition 3.8 of \cite{Mi94}):
\blem  [\sc Mierczy\'nski]\thl{leaves}
{~}
\begin{description}
\item[(a)] ${\cal L}(x)$ is tangent to $E_2 (x)$ at
$x$

\item[(b)] ${\cal L}(x)$ is forward invariant

\item[(c)] $\lim_{n\to\infty}S^n y=x$  uniformly for $y\in {\cal L}(x)$

\itd
Set $B(J)=\bigcup_{x\in J}{\cal L}(x)$.  Then the interior of $B(J)$
is a neighborhood of $J\setminus \partial J$, and $J$ is a global
attractor for $S|B(J)$.
\end{description}
\elem
I call $B(J)$ a {\em contracting collar} for $J$.

Denote the two endpoints of $J$ by $e_0\ll e_1$.  Set
\[Q(J)=\lfloor {\cal L} (e_0), {\cal L}(e_1) \rceil\]
\begin{proposition}[\sc Mierczy\'nski]\mylabel{th:pnbd}
$B(J)$ is a neighborhood of $J$ in $Q(J)$.
\end{proposition}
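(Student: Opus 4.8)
The plan is to show that $B(J)$ contains a relative neighborhood of every point of $J$ in $Q(J)$. Since $B(J) \supset J$ and $Q(J)$ is the closed "order parallelepiped" $\lfloor \mathcal{L}(e_0), \mathcal{L}(e_1)\rceil$, it suffices to check the claim separately for interior points of $J$ and for the two endpoints $e_0, e_1$. For $x \in J \setminus \partial J$ this is immediate from Lemma \ref{th:leaves}(d): the interior of $B(J)$ is already a neighborhood of $J\setminus\partial J$ in the whole space $E$, hence \emph{a fortiori} in $Q(J)$. So the real content is the behavior near the endpoints $e_0$ and $e_1$, where the collar leaves $\mathcal{L}(e_i)$ form part of the boundary of $Q(J)$ and the argument must be one-sided.

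I would treat the endpoint $e_0$ (the case of $e_1$ is dual). Fix a small relatively open set $U \subset Q(J)$ containing $e_0$; I want to show $U \subset B(J)$ after shrinking. Every $z \in Q(J)$ satisfies $z \in [a,b]$ for some $a \in \mathcal{L}(e_0)$, $b \in \mathcal{L}(e_1)$, so in particular $z \ge a$ with $a$ close to $e_0$ when $z$ is close to $e_0$ (using that $\mathcal{L}(e_0)$ is a $C^1$ hypersurface through $e_0$ tangent to $E_2(e_0)$, together with the direct sum decomposition $E = E_1(e_0)\oplus E_2(e_0)$ with $E_2(e_0)\cap E^+ = \{0\}$, which forces the $E_1$-component of $z-a$ to be $\ge 0$). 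By Lemma \ref{th:leaves}(c), $S^n a \to e_0$ uniformly for $a$ in a neighborhood of $e_0$ in $\mathcal{L}(e_0)$, and by monotonicity $S^n z \ge S^n a$, while $z \le b$ gives $S^n z \le S^n b \to e_1$-side data. The key point is to push $z$ into the contracting collar: I would argue that for $n$ large, $S^n z$ lies in the interior of $B(J)$ because it is squeezed between $S^n a$ (tending to $e_0$) and the part of the orbit that is already trapped by part (d) of Lemma \ref{th:leaves}; then invariance of the collar structure and forward invariance of $B(J)$ (Lemma \ref{th:leaves}(b)) let me conclude $z \in B(J)$, using that membership in $B(J)$ is detected by the asymptotics of the orbit.

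The main obstacle I anticipate is the endpoint analysis: near $e_0$ the leaf $\mathcal{L}(e_0)$ is genuinely in the topological boundary of $Q(J)$, so I cannot simply invoke openness of $\Int B(J)$, and I must instead exploit the one-sided order structure — that points of $Q(J)$ near $e_0$ dominate points of $\mathcal{L}(e_0)$ near $e_0$ in the $E_1(e_0)$-direction. Making the phrase "$z$ dominates some $a \in \mathcal{L}(e_0)$ close to $e_0$" precise requires the transversality of $\mathcal{L}(e_0)$ to the principal eigendirection $E_1(e_0) = T_{e_0}J$ (Theorem \ref{th:parcs}(b)) and the closedness/cone-emptiness of $E_2(e_0)$; a careful local coordinate computation splitting $E$ along $E_1(e_0)\oplus E_2(e_0)$ does this, but it is the technical heart and I would keep it to the neighborhood of $e_0$ rather than attempting anything global. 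Once the two endpoints are handled, assembling the neighborhood of all of $J$ is routine, and the proposition follows by a finite covering / gluing argument together with compactness of $J$.
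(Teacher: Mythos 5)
The paper does not prove this proposition at all: it simply cites Proposition 3.8(vi) of Mierczy\'nski \cite{Mi94}, where the statement is part of the construction of the collar itself. So your attempt is necessarily a reconstruction from the listed properties of the leaves, and as such it has a genuine gap at the decisive step. Your plan reduces everything to the endpoints (the interior case via Lemma \rth{leaves}(d) is fine), but the endpoint argument does not close. First, you conclude $z\in B(J)$ from the fact that some forward iterate $S^nz$ lands in $B(J)$, invoking forward invariance (Lemma \rth{leaves}(b)); forward invariance goes the wrong way ($S(B(J))\subset B(J)$ gives nothing about preimages), and nothing in Lemma \rth{leaves} or Proposition \rth{parcs} says that ``membership in $B(J)$ is detected by the asymptotics of the orbit.'' Membership in $B(J)$ means lying on a specific leaf ${\cal L}(x)$, $x\in J$, and the stated properties do not identify $\bigcup_{x\in J}{\cal L}(x)$ with the set of points attracted to $J$; that identification (locally, inside $Q(J)$) is essentially the proposition you are trying to prove, so the argument is circular at this point. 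Second, the squeezing step itself is shaky near $e_0$: if the orbit of $z$ converges to the endpoint $e_0$ (which is exactly what happens for the problematic points of $Q(J)$ near $e_0$), its iterates never approach $J\setminus\partial J$, so Lemma \rth{leaves}(d) cannot be used to trap them in $\Int B(J)$.

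What is missing is precisely the ``technical heart'' you defer to a careful local coordinate computation: one needs to know how the leaves ${\cal L}(x)$ are actually built --- as a continuous family of $C^1$ graphs over the complementary spaces $E_2(x)$, of uniform size, parametrized by $x\in J$, so that near $e_0$ they sweep out a one-sided relative neighborhood inside $\lfloor {\cal L}(e_0),{\cal L}(e_1)\rceil$. That uniformity and covering property is established in Mierczy\'nski's construction (via exponential separation along the principal eigendirection) and cannot be recovered from the four bullet-point properties quoted in Lemma \rth{leaves} together with Theorem \rth{parcs}(b). Your observation that $z\in Q(J)$ near $e_0$ dominates some $a\in{\cal L}(e_0)$ close to $e_0$ is a reasonable starting point, but by itself it only controls the orbit of $z$ from below; it does not place $z$ on any leaf. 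So the proposal is a plausible outline, but the step it labels as routine modulo a local computation is exactly the content of the cited result, and the steps it does spell out (asymptotics plus forward invariance imply membership in $B(J)$) are not valid as stated.
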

\bpr This is Proposition 3.8(vi) of \cite{Mi94}. 
\epr

Define  forward invariant sets
\beqarr
V^-(J)	&=&	\{x\in X:\ga x\cap (\Int J_-) \ne\emptyset\},\\
V^+(J)	&=&	\{x\in x:\ga x\cap \Int (J_+) \ne\emptyset\},\\
V (J)&=&	X\setminus \left(V^-(J)\cup V^+(J)\right)
\eeqarr
Then $V^-(J)$ and $V^+(J)$ are open and $V (J)$ is closed in $X$.

\begin{proposition}		\mylabel{th:pnbd3}
$B(J)$ is a neighborhood of $J$ in $V (J)$.
\end{proposition}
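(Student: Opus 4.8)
The plan is to combine this with Proposition \rth{pnbd}, which furnishes an open $W_{0}\supseteq J$ with $W_{0}\cap Q(J)\subseteq B(J)$: it then suffices to produce an open $W_{1}\supseteq J$ with $W_{1}\cap V(J)\subseteq Q(J)$, since then $W:=W_{0}\cap W_{1}$ is an open neighborhood of $J$ with $W\cap V(J)\subseteq B(J)$, and $J\subseteq V(J)$ because each point of $J$ is an equilibrium lying in neither $\Int J_{-}$ nor $\Int J_{+}$. Write $\partial J=\{e_{0},e_{1}\}$ with $e_{0}\ll e_{1}$. Integrating $h'\gg 0$ gives $e_{0}\ll z\ll e_{1}$ for all $z\in J\setminus\partial J$, so every point close enough to such a $z$ is $\gg e_{0}$ and $\ll e_{1}$, hence (as $e_{0}\in{\cal L}(e_{0})$ and $e_{1}\in{\cal L}(e_{1})$) lies in ${\cal L}(e_{0})^{\bullet}\cap{\cal L}(e_{1})_{\bullet}=Q(J)$; in fact near $J\setminus\partial J$ even $B(J)$ itself is a neighborhood, by Lemma \rth{leaves}(d). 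Thus it remains only to find, near each endpoint, a neighborhood $W$ with $W\cap V(J)\subseteq Q(J)$.

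\textbf{A dichotomy at $e_{0}$.} Work in the splitting $E=E_{1}(e_{0})\oplus E_{2}(e_{0})$, with $E_{1}(e_{0})=\RR v$, $v\gg 0$. By Lemma \rth{leaves}(a), near $e_{0}$ the leaf ${\cal L}(e_{0})$ is the graph $w\mapsto e_{0}+\alpha(w)v+w$ ($w\in E_{2}(e_{0})$), where $\alpha\in C^{1}$, $\alpha(0)=0$, $D\alpha(0)=0$. Given $x=e_{0}+sv+w$ near $e_{0}$, put $a(x):=e_{0}+\alpha(w)v+w\in{\cal L}(e_{0})$; then $x-a(x)=(s-\alpha(w))v$ is a multiple of $v\gg 0$, so $x$ and $a(x)$ are comparable. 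If $s\ge\alpha(w)$, then $a(x)\le x$, so $x\in{\cal L}(e_{0})^{\bullet}$, and also $x\ll e_{1}\in{\cal L}(e_{1})$ (as $x$ is near $e_{0}\ll e_{1}$), whence $x\in Q(J)$. Taking the endpoint neighborhood small, everything therefore reduces to the following \emph{Claim} (its analogue at $e_{1}$ being the order-dual): if $x$ is near $e_{0}$ and $x\ll a$ for some $a\in{\cal L}(e_{0})$, then some iterate of $x$ lies in $\Int J_{-}$, so $x\in V^{-}(J)$ and $x\notin V(J)$.

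\textbf{The Claim, easy part.} Since $x<a$ and $S$ is strongly monotone, $S^{n}x\ll S^{n}a$ for $n\ge 1$, and $S^{n}a\to e_{0}$ by Lemma \rth{leaves}(c). As $x\ll e_{1}$ we get $S^{n}x\ll e_{1}$ for all $n$, so $\ga x$ is bounded above and $\oc x$ is compact by Hypothesis \rth{hyp0}(vi); hence $\om x\neq\emptyset$, and letting $S^{n_{k}}x\to p\in\om x$ and using $S^{n_{k}}x\le S^{n_{k}}a\to e_{0}$ gives $p\le e_{0}$ for every $p\in\om x$. If some $p\in\om x$ has $p<e_{0}$, then $Sp\ll Se_{0}=e_{0}$, so $Sp\in\Int J_{-}$; since $\om x$ is invariant and $\om x\subseteq\oc x$, the open set $\Int J_{-}$ meets $\ga x$, and the Claim holds. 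So we may assume $\om x=\{e_{0}\}$, i.e.\ $S^{n}x\to e_{0}$.

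\textbf{The Claim, the crux.} Here $e_{0}$ is a fixed point with principal eigenvalue $\rho(e_{0})=1$ (it lies on a stationary p-arc), eigendirection $E_{1}(e_{0})=\RR v$, and $\rho_{2}(e_{0})<1$ on $E_{2}(e_{0})$: this is Mierczy\'nski's local situation \cite{Mi94}. First, no iterate of $x$ lies in $B(J)$: if $S^{N}x\in B(J)$, then, $B(J)$ being forward invariant with pairwise disjoint leaves, $J$ attracting $S|B(J)$, and $S^{n}x\to e_{0}$, we get $S^{n}x\in{\cal L}(e_{0})$ for all large $n$, while $S^{n}x\ll S^{n}a\in{\cal L}(e_{0})$ with both points near $e_{0}$ — contradicting that ${\cal L}(e_{0})$ is unordered near $e_{0}$ (a consequence of its tangency to $E_{2}(e_{0})$ and of $E_{2}(e_{0})\cap\Ep=\{0\}$). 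So, for $N$ large, $y:=S^{N}x$ converges to $e_{0}$ and lies off ${\cal L}(e_{0})=W^{ss}(e_{0})$ with $y\ll a(y)\in{\cal L}(e_{0})$ (otherwise $y\in Q(J)$ near $J$, hence $y\in B(J)$ by Proposition \rth{pnbd}, a contradiction). By the local center--stable structure at $e_{0}$, $y$ then sits on the strong-stable fibre of a point $z\neq e_{0}$ of the one-dimensional center manifold of $e_{0}$ with $S^{n}z\to e_{0}$; since the center-manifold branch on the $e_{1}$-side of $e_{0}$ consists of the equilibria of $J$ (not asymptotic to $e_{0}$), $z$ lies on the opposite branch, which is tangent to $v\gg 0$ at $e_{0}$ and lies below it, so $z\ll e_{0}$ and $S^{n}z\ll e_{0}$ for all $n$. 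Finally $\|S^{n}y-S^{n}z\|\to 0$ at the strong-stable rate $O(\rho_{2}(e_{0})^{n})$, whereas $S^{n}z\to e_{0}$ only sub-exponentially (the center eigenvalue being $1$); hence for $n$ large, $e_{0}-S^{n}y=(e_{0}-S^{n}z)+(S^{n}z-S^{n}y)$ is the sum of $e_{0}-S^{n}z\gg 0$ and a negligible term, so $S^{N+n}x=S^{n}y\in\Int J_{-}$. This proves the Claim, and with it the Proposition. I expect this last case to be the real obstacle: it cannot be settled by order relations alone and genuinely needs Mierczy\'nski's local invariant-manifold picture at the endpoint together with the rate gap between the neutral center direction and the contracting directions.
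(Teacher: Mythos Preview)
Your reduction (find an open $W_1\supseteq J$ with $W_1\cap V(J)\subseteq Q(J)$, then intersect with the $W_0$ coming from Proposition \rth{pnbd}) is the correct one, and your endpoint dichotomy isolates precisely the difficulty that the paper's very brief argument glosses over: the paper writes ``because $z\gg e_0$'' for $z\in J$, which fails at $z=e_0$, and its stated reduction (``$V(J)\cap Q(J)$ is a neighborhood of $J$ in $Q(J)$'') appears to point in the wrong direction. So structurally you are being more careful than the paper.

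That said, your ``crux'' paragraph contains a genuine gap. You invoke a one-dimensional center manifold at $e_0$ extending to the $\ll e_0$ side, a strong-stable foliation over that branch, and a rate comparison (exponential along fibres versus sub-exponential along the center). None of this is supplied by the quoted tools: Lemma \rth{leaves} furnishes strong-stable leaves only over $J$ itself, not over a hypothetical branch below $e_0$, and in an infinite-dimensional Banach space the existence, uniqueness, and foliated structure of such a center manifold---as well as the sub-exponential convergence claim---are substantial extra inputs you have not justified. There is a much shorter route using only Theorem \rth{mier1}. In the case $\om x=\{e_0\}$ you have $x<a$ with $\om a=\{e_0\}$; since the set of points converging to $e_0$ non-monotonically is unordered, at least one of $\ga x,\ga a$ is eventually monotone. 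It cannot be $\ga a$: that would force, for large $n$, the distinct points $e_0$ and $S^n a$ of ${\cal L}(e_0)$ to be related near $e_0$, contradicting the local unorderedness of ${\cal L}(e_0)$ that you already invoke. Hence $\ga x$ is eventually monotone. If $\ga x\searrow e_0$ then eventually $e_0\ll S^n x\ll S^n a$, again giving $e_0<S^n a$ on ${\cal L}(e_0)$---contradiction. Therefore $\ga x\nearrow e_0$, so some $S^n x<e_0$, whence $S^{n+1}x\ll e_0$ and $x\in V^-(J)$. This closes the Claim without any center-manifold machinery.
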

\bpr By Proposition \rth{pnbd} it suffices to prove that $V (J)\cap Q(J)$
is a neighborhood of $J$ in $Q(J)$.  Let $\{x_n\}$ be a sequence in $
Q(J)$ converging to a point $z\in J$.  We can choose $n_*$ sufficiently large
that $x_k\gg e_0$ for all $k\ge n_*$, because $z\gg e_0$.  For such
$k$ we have $\ga{x_k}\gg e_0$ by strong monotonicity, whence $x_k
\not\in V^- (J)$.  Similarly for $V^+ (J)$. \epr

\begin{proposition}		\mylabel{th:pnbd4}
$J$ is an attractor for $S|V (J)$.
\end{proposition}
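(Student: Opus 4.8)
The plan is to obtain this as a short consequence of Mierczy\'nski's contracting‑collar results, specifically Lemma \rth{leaves} and Proposition \rth{pnbd3}, once a couple of routine facts about $V(J)$ are in hand. Recall that calling $J$ an attractor for $S|V(J)$ requires (i) that $J$ be compact and invariant, and (ii) that there be a set $W$, open in $V(J)$, with $J\subset W$ and $\dist(S^n w,J)\to 0$ uniformly for $w\in W$. Item (i) is immediate: $J$ is a p‑arc, hence compact, and every point of $J$ is a fixed point, so $S(J)=J$. One also needs $S|V(J)$ to be a genuine self‑map, i.e.\ $V(J)$ forward invariant; this is already recorded when $V(J)$ is defined, and in any case follows at once from $\ga{Sx}\subset\ga x$, which forces $Sx\in V^{\pm}(J)\Rightarrow x\in V^{\pm}(J)$. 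It is also worth noting that $J\subset V(J)$: since $J$ is totally ordered with endpoints $e_0\ll e_1$ one has $J_-=(e_0)_-$ and $J_+=(e_1)_+$, whose interiors are disjoint from $J$, so the orbit $\{x\}$ of a point $x\in J$ meets neither $\Int J_-$ nor $\Int J_+$.

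For item (ii) I would invoke Proposition \rth{pnbd3}: the contracting collar $B(J)$ is a neighborhood of $J$ in $V(J)$, so there is a set $W$, open in $V(J)$, with $J\subset W\subset B(J)$. Each leaf ${\cal L}(x)$ is forward invariant by Lemma \rth{leaves}(b), hence so is $B(J)=\bigcup_{x\in J}{\cal L}(x)$; thus the $S$‑orbit of any $w\in W$ remains inside $B(J)$, where its behavior is governed by Lemma \rth{leaves}(d). That lemma asserts that $J$ is a global attractor for $S|B(J)$, i.e.\ $\dist(S^n y,J)\to 0$ uniformly for $y\in B(J)$; restricting to $W\subset B(J)$ preserves the uniform convergence. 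Combined with (i), this exhibits $J$ as an attractor for $S|V(J)$.

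I do not anticipate a real obstacle here: the substantive content is already contained in Lemma \rth{leaves} and Propositions \rth{pnbd}, \rth{pnbd3}, and what remains is bookkeeping. The only point that calls for slight care is matching the formal definition of ``attractor'' used in this paper---an invariant set lying in an open set on which the convergence is uniform---to the conclusion of Lemma \rth{leaves}(d); in particular one should be sure that the uniform basin there is all of $B(J)$, so that passing to the smaller $V(J)$‑open neighborhood $W$ does no harm.
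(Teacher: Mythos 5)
Your proposal is correct and follows essentially the paper's own argument: reduce via Proposition \ref{th:pnbd3} to the contracting collar $B(J)$ and then invoke the uniform convergence to $J$ on $B(J)$ from Lemma \ref{th:leaves} (the paper cites part (c), you cite part (d), which is the same content packaged as ``$J$ is a global attractor for $S|B(J)$''). The extra bookkeeping you include (invariance of $J$, forward invariance of $V(J)$, $J\subset V(J)$) is fine and only makes explicit what the paper leaves tacit.
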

\bpr
By Lemma \rth{pnbd3}  it is enough to prove that
$\lim_{n\to\infty}S^n x=0$ uniformly for $x\in B (J)$. This follows
from  \rth{leaves}(c).
\epr

\begin{lemma}		\mylabel{th:sparcs}
Let $J$ be a stationary p-arc. Suppose $x\in X$
and $\om x \cap J_\bullet \ne\emptyset$.
Then either $\om x \ll J$ or $\om x$ is a singleton in $J$.
\end{lemma}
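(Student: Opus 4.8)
The plan is to argue by a dichotomy on whether or not $\om x$ meets the arc $J$. Parametrise $J$ by an order-increasing $C^1$ map $h\colon[0,1]\to E$, so that $e_0:=h(0)\ll h(1)=:e_1$, every $h(t)$ is a fixed point, $J$ is totally ordered with least element $e_0$, and hence $J_-=(e_0)_-$ and $\Int(J_-)=\{w:w\ll e_0\}$. I will use freely that $J_\bullet$ is closed (compactness of $J$), that $\om x$ is a nonempty compact set with $S(\om x)=\om x$ (Hypothesis \rth{hyp0}(vi)), and that $\om x$ is unordered. The key device is the map $\tau\colon J_\bullet\to[0,1]$, $\tau(z)=\min\{t:h(t)\ge z\}$; I would first check that $\tau$ is lower semicontinuous, and that whenever $z\in J_\bullet$ satisfies $\tau(z)>0$ and $z\ne h(\tau(z))$ one has $Sz\in J_\bullet$ with $\tau(Sz)<\tau(z)$ — indeed $z<h(\tau(z))$ forces $Sz\ll S h(\tau(z))=h(\tau(z))$, and openness of $\Epp$ with continuity of $h$ then yields a $t'<\tau(z)$ with $Sz\le h(t')$.

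\emph{Case $\om x\cap J\ne\emptyset$.} Fix $y\in\om x\cap J$ and a sequence $S^{n_k}x\to y$. Since $B(J)$ is a neighbourhood of $J$ in $V(J)$ (Proposition \rth{pnbd3}), while $V^-(J),V^+(J)$ are open and $X=V^-(J)\cup V^+(J)\cup V(J)$, the point $y$ has a neighbourhood contained in $V^-(J)\cup V^+(J)\cup B(J)$, so $w:=S^{n_k}x$ lies in one of these three sets for large $k$. If $w\in B(J)$ then $w\in{\cal L}(y')$ for some $y'\in J$, and forward invariance of the leaf (Lemma \rth{leaves}(b)) together with Lemma \rth{leaves}(c) gives $\om x=\om w=\{y'\}$, so $y'=y$. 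If $w\in V^-(J)$ then also $x\in V^-(J)$ (its tail $\ga w\subseteq\ga x$ meets $\Int(J_-)$), so strong monotonicity propagates $S^nx\ll e_0$ for all large $n$ and $\om x\le e_0$; as $y\in\om x$ and $y\ge e_0$ we get $y=e_0$, and unorderedness of $\om x$ then forces $\om x=\{e_0\}$. The subcase $w\in V^+(J)$ is dual. So in every subcase $\om x$ is a singleton lying in $J$.

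\emph{Case $\om x\cap J=\emptyset$.} Here I would run a monotone descent. The set $A:=\om x\cap J_\bullet$ is nonempty (the hypothesis of the lemma), compact, and forward invariant (if $z\le h(\tau(z))$ then $Sz\le h(\tau(z))$), so $\tau|A$ attains its minimum $\mu$ at some $z_0\in A$. If $\mu>0$ then $z_0\ne h(\mu)$, since $h(\mu)\in J$ while $\om x\cap J=\emptyset$; the descent property then gives $\tau(Sz_0)<\mu$ with $Sz_0\in A$, contradicting minimality. Hence $\mu=0$, so $z_0\le e_0$, and in fact $z_0<e_0$ because $e_0\in J$ is not in $\om x$. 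Then $Sz_0\ll e_0$, so $Sz_0\in\om x\cap\Int(J_-)$; approximating $Sz_0$ by orbit points of $x$ gives $x\in V^-(J)$, whence $S^nx\ll e_0$ for all large $n$ and $\om x\le e_0$. Since $e_0\notin\om x$, invariance of $\om x$ (each $q\in\om x$ is $Sq'$ with $q'\in\om x$, so $q'<e_0$ and $q\ll e_0$) improves this to $\om x\ll e_0$, and as $e_0=\min J$ we conclude $\om x\ll J$.

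The step I expect to require the most care is the first case. Once the orbit returns near a point of $J$, one has to rule out every $\omega$-limit set other than a single equilibrium of $J$, and the tool is that near such a point $X$ is exhausted by $V^-(J),V^+(J)$ and the contracting collar $B(J)$, so the orbit is either captured by a leaf (giving $\om x=\{y'\}$) or slides into $V^\pm(J)$ (forcing $\om x$ onto an endpoint of $J$). This is exactly where Mierczyński's structure theory is indispensable: the collar $B(J)$, the leaves $\{{\cal L}(x)\}$, Lemma \rth{leaves}, and Proposition \rth{pnbd3}. In the second case the only delicate point is invoking compactness and lower semicontinuity of $\tau$ to locate a point of $\om x$ strictly below $e_0$; everything else there is direct manipulation of strong monotonicity.
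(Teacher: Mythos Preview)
Your argument is correct and uses the same top-level dichotomy as the paper (whether $\om x$ meets $J$ or not), but the two cases are handled by genuinely different mechanisms.

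In the case $\om x\cap J\ne\emptyset$, the paper works \emph{inside} $\om x$: it first observes $\om x\cap B(J)\subset{\cal L}(p)$ by unorderedness, then invokes the attractor-freeness of $\om x$ to force $p\in\partial J$ (otherwise $\{p\}$ would be a proper attractor in $\om x$ via Propositions \rth{pnbd} and \rth{pnbd4}), and finally argues that $p$ is isolated in $\om x$, whence $\om x=\{p\}$. You instead track an \emph{orbit point} $w=S^{n_k}x$ near $y$ and use the trichotomy $V^-(J)\cup V^+(J)\cup B(J)$ coming from Proposition \rth{pnbd3}; each branch directly yields a singleton. Your route avoids any explicit appeal to attractor-freeness of $\om x$ and handles interior and endpoint cases uniformly, at the cost of bringing in the sets $V^\pm(J)$ which the paper does not need here.

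In the case $\om x\cap J=\emptyset$, the paper simply asserts the existence of a minimal $q\in J$ with $q>\om x$, then upgrades $q>\om x$ to $q\gg\om x$ and uses minimality to push $q$ down to $e_0$. Your height-function $\tau$ with its strict-descent property under $S$ is a different device: it locates a point of $\om x$ at parameter $0$ directly, and then bootstraps via $V^-(J)$ to $\om x\ll e_0$. Your version is more self-contained, since the paper's existence claim for $q$ is stated without justification (one would otherwise need an attractor argument in $\om x$ to see that all of $\om x$, not just one point, lies below some element of $J$). Both approaches ultimately rest on the same Mierczy\'nski machinery (Lemma \rth{leaves}, Propositions \rth{pnbd}--\rth{pnbd4}).
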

\begin{proof}
Suppose there exists  $p\in \om x \cap J$.
Then $\om x \cap B (J)\subset {\cal L} (p)$, for otherwise $\om x$
would contain two points of $J$, contradicting $\om x$ being
unordered.  Therefore  $p\in \partial J$, for otherwise $\om x \cap J$
would be a proper  attractor in $\om x$ by 
Propositions \rth{pnbd} and  \rth{pnbd4}.
Thus  $\om x \cap J =\{p\}$ where $p$ is an
endpoint of $J$; we need consider only the case $p=\inf J$.  Then
$\om x \cap  {\cal L} (p)_\bullet = p$.  For otherwise $\om x$ would
contain a point $<p$, by Lemma \rth{leaves}(c).  It
follows that $p$ is an isolated fixed point in $\om x$, which implies
$\om x =\{p\}$.  

Suppose $\om x \cap J$ is empty.  There exists a minimal $q\in J$ such
that $q>\om x$.  Then $q\gg \om x$, whence minimality of $q$ yields
$\om x \ll J$.
\epr

\paragraph{Consequences of chaining.}
In the remainder of this section $T:X\to X$ denotes any strongly
monotone continuous map.  (Here $X$ could be any 
metric space endowed with a closed partial
order relation $R\subset X\times X$.  The notation $x\ll y$ then means 
$(x,y)\in\Int R$.)  

Let  $v,u\in X$ denote chain recurrent
points such that $v>u$.

\begin{proposition}
\mylabel{th:basic}
  Suppose there exists $z\in X$  such that
\[v>Tz > z \ge u\]
and
\[\dist (T (z_+),\: X\setminus \Int (z_+))=\eps>0\]
Then $v\not \leadsto u$. In fact, if $0<\delta<\eps$ and 
$v=x_0,x_1,\ldots,x_n$
is a $\delta$-chain, then $x_n\gg u$.
\end{proposition}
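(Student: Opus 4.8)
The plan is to prove the ``in fact'' assertion first; the conclusion $v\not\leadsto u$ is then immediate. Indeed, suppose $v\leadsto u$ and fix $\delta$ with $0<\delta<\eps$. Then there is a $\delta$-chain $v=x_0,x_1,\ldots,x_n=u$, necessarily with $n\ge 1$. The ``in fact'' assertion would force $u=x_n\gg u$, which is impossible since $0\notin\Epp$; hence $v\not\leadsto u$.

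So fix $0<\delta<\eps$ and a $\delta$-chain $v=x_0,\ldots,x_n$, i.e.\ $||Tx_{i-1}-x_i||<\delta$ for $i=1,\ldots,n$, with $n\ge 1$. The engine of the argument is the right reading of the hypothesis $\dist(T(z_+),\: X\setminus\Int(z_+))=\eps$: every point of $X$ lying within distance $\eps$ of $T(z_+)$ must already belong to $\Int(z_+)$. I would then prove by induction on $i$ that $x_i\gg z$ for $i=1,\ldots,n$. Base case: since $v>Tz>z$ we have $v\ge z$ by transitivity, so $x_0=v\in z_+$ and $Tx_0\in T(z_+)$; as $||Tx_0-x_1||<\delta<\eps$, the reformulated hypothesis gives $x_1\in\Int(z_+)$, i.e.\ $x_1\gg z$. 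Inductive step: if $x_i\gg z$ then in particular $x_i\in z_+$, hence $Tx_i\in T(z_+)$, and $||Tx_i-x_{i+1}||<\delta<\eps$ again yields $x_{i+1}\in\Int(z_+)$, i.e.\ $x_{i+1}\gg z$. Granting the induction, $x_n\gg z$, and since $z\ge u$ this gives $x_n\gg u$ (because $\Epp+\Ep\subseteq\Epp$, equivalently because $z\ge u$ implies $\Int(z_+)\subseteq\Int(u_+)$). This is the ``in fact'' assertion, and with it the proposition.

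I do not expect a real obstacle: in essence $z_+$ is a trapping region whose forward image $T(z_+)$ lies a definite distance $\eps$ inside $\Int(z_+)$, so a $\delta$-chain with $\delta<\eps$ starting in $z_+$ can never escape $\Int(z_+)$. The only points needing some care are the exact unwinding of the distance hypothesis and the elementary remark $u\not\gg u$ used to reach the contradiction. I would also note that chain recurrence of $u$ and $v$ is never invoked --- only $v>Tz>z\ge u$ and the distance bound are used --- and that the same proof works verbatim in the general closed-partial-order setting mentioned in the text, once one records $\Int R\subseteq R$ (so $x\gg z\Rightarrow x\ge z$) and the compatibility ``$z\ge u$ and $x\gg z$ imply $x\gg u$''.
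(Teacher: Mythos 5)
Your proposal is correct and follows essentially the same route as the paper: an induction along the $\delta$-chain showing each $x_i\gg z$ (hence $\gg u$), driven by the observation that any point within $\delta<\eps$ of $T(z_+)$ must lie in $\Int(z_+)$. The only cosmetic difference is that the paper gets the base case from strong monotonicity ($Tv\gg T^2z\gg Tz$) and a triangle-inequality estimate, whereas you simply use $v\ge z$ so $Tv\in T(z_+)$ — a slight streamlining, not a different argument.
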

\bpr
$Tv\in \Int T(z_+)$, because
$Tv\gg T^2(z)\gg Tz$ by strong monotonicity. 
This implies 
$x_1\gg z$, i.e., $x_1\in \Int (z_+)$, because:
\beqarr
\dist(x_1, X\setminus \Int (z_+))	 &\ge&
-d(x_1, Tv)\,+\,\dist(Tv,\,X \setminus \Int (z_+))\\
&\ge& -\del\,+\,\dist (T(u)_+,X \setminus \Int (z_+))\\ 
&=& -\del +\eps >0
\eeqarr
Thus
$x_1\gg z\ge u,$ 
and  the same calculation shows by induction on $n$
that all $x_n \gg u$. 
\epr

\begin{corollary}		\mylabel{th:nccor1a}
If $v,u$ are chain equivalent and $v\ge y\ge u$, then $y$ does not
converge eventually monotonically.
\end{corollary}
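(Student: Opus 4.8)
The plan is to argue by contradiction and reduce to Proposition \ref{th:basic} (in one of the two cases, to its order-dual). Suppose $\gamma y$ converges eventually monotonically. Reversing the partial order if necessary --- this preserves strong monotonicity of $T$, turns $v\ge y\ge u$ into $u\ge y\ge v$, preserves chain equivalence, and interchanges ``eventually increases'' with ``eventually decreases'' --- I may assume $\gamma y\searrow p$, so some $n\ge 0$ has $T^ky\gg T^{k+1}y\gg p$ for all $k\ge n$. Applying $T^n$ to $v\ge y\ge u$ and using that $T$ is monotone and carries chain equivalent points to chain equivalent points, I may replace $(v,y,u)$ by $(T^nv,T^ny,T^nu)$ and so assume $y\gg Ty\gg T^2y\gg\cdots\to p$ with $u\le y\le v$. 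If $u=v$ then $u=y=v$ has a strictly decreasing orbit; I return to this degenerate case at the end, so assume $u<v$.

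Next I would locate a point $z$ fitting the order-dual of Proposition \ref{th:basic}. For each $k\ge 1$ one has $T^ky\ll y\le v$, so $T^ky\ll v$, and $T(T^ky)=T^{k+1}y\ll T^ky$. The gap condition is also automatic for $z=T^ky$: since $T^ky-T^{k+1}y\in\Epp$ and $\Epp$ is open, fix $r>0$ with the ball of radius $r$ about $T^ky-T^{k+1}y$ contained in $\Epp$; then for every $w\le T^ky$ and every $h$ with $\|h\|<r$ we get $T^ky-(Tw+h)=(T^ky-T^{k+1}y-h)+(T^{k+1}y-Tw)\in\Epp+\Ep=\Epp$, so $T\big((T^ky)_-\big)$ stays at distance $\ge r$ from $X\setminus\Int\big((T^ky)_-\big)$. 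Thus for every $k\ge 1$ the point $z=T^ky$ already satisfies $Tz\ll z\le v$ together with the required positive gap.

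The one remaining hypothesis is $u<Tz=T^{k+1}y$ for suitable $k$; since $T^ky\to p$ with $T^ky\gg p$, this is equivalent to $p\ge u$ --- the assertion that the forward orbit of $y$ does not eventually fall strictly below $u$ --- and I expect this to be the real obstacle. The mechanism I would use is the chain recurrence of $u$ together with strong monotonicity: if $T^Ny\not\ge u$ first occurs at $N\ge 1$, then $T^{N-1}y\ge u$ yields $T^Ny\ge Tu$, hence $Tu\not\ge u$; the case $Tu<u$ is impossible because $w:=Tu$ would be chain recurrent with $Tw\ll w$, and running the distance estimate from the proof of Proposition \ref{th:basic} along an $\eps$-chain from $w$ to $w$ forces its terminal point to be $\ll w\neq w$, contradicting $w\leadsto w$. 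What remains is to rule out $Tu$ being \emph{incomparable} to $u$; here I would bring in $v$ and the hypothesis $u\approx v$, using $\omega(u)\le p\le\omega(v)$ (which follow from $T^ku\le T^ky\le T^kv$ and $T^ky\to p$) and chain transitivity of the common chain component of $u$ and $v$, and I expect this final exclusion to be the most delicate point. Granting $p\ge u$, pick $k$ with $T^{k+1}y\gg p\ge u$; then $z=T^ky$ meets all hypotheses of the order-dual of Proposition \ref{th:basic}, which gives $u\not\leadsto v$, contradicting $u\approx v$. Finally, in the set-aside case $u=v=y$, the point $w:=Tu$ is again chain recurrent with $Tw\ll w$, impossible by the same estimate; hence $y$ does not converge eventually monotonically.
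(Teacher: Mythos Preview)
Your reduction to the order-dual of Proposition~\ref{th:basic} is the right idea, and your verification of the distance condition is correct. The gap is exactly where you flag it: you need $u<Tz=T^{k+1}y$, which you reduce to $p\ge u$ and then cannot finish (the case ``$Tu$ incomparable to $u$'' is left open, and there is no evident reason it should be excludable at that level of generality). This detour is self-inflicted: it arises only because you freeze $u$ while pushing $y$ forward along its orbit.

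The fix is to push $u$ forward too. After your reduction to $u\le y\le v$ with $y\gg Ty$, simply take $z=y$ and replace $u$ by $u':=Tu$, which is still chain equivalent to $v$ since $u$ is chain recurrent. From $u\le y$ monotonicity gives $u'=Tu\le Ty=Tz$; if $u<y$ this is $u'\ll Tz$, while if $u=y$ one iterates once more to get $T^2u\ll Tu=Tz$. In either case the dual hypotheses $u'<Tz\ll z\le v$ hold, your distance estimate is unchanged, and the order-dual of Proposition~\ref{th:basic} yields $u'\not\leadsto v$, contradicting $u'\approx v$. No analysis of whether $p\ge u$, no case split on the relation between $Tu$ and $u$, and no separate treatment of $u=v$ is needed. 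This is precisely the device in the paper's proof of the parallel Proposition~\ref{th:nccor1}: one replaces the endpoints by suitable iterates so that the required order relations with $z$ and $Tz$ follow directly from monotonicity.
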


\begin{proposition}		\mylabel{th:nccor4}
Suppose $x,y$ belong to an attractor-free compact invariant set
$M\subset X$ and $ x <y$. Then $x$ is convergent if and only if
$y$ is convergent.
\end{proposition}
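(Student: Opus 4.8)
The plan is to prove the implication ``$x$ convergent $\Rightarrow$ $y$ convergent'' and to obtain the converse by duality: reversing the order cone of $E$ leaves $S$ strongly monotone and leaves both ``attractor-free'' and ``convergent'' unchanged, while interchanging the roles of the larger and the smaller point. So assume $\omega(x)=\{p\}$; since $\omega(x)$ is invariant, $p$ is a fixed point, and since $M$ is closed and invariant, $p\in M$. Because $M$ is attractor-free it is chain transitive, so all its points are mutually chain equivalent. From $\oc x$ compact together with $\omega(x)=\{p\}$ we get $S^nx\to p$, and strong monotonicity gives $S^nx\ll S^ny$ for $n\ge1$; letting $n\to\infty$ along subsequences realizing points of $\omega(y)$ then yields $\omega(y)\subset p_+$. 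Since $\omega(y)$ is unordered, if $p\in\omega(y)$ then $\omega(y)=\{p\}$ and $y$ is convergent. Otherwise every $w\in\omega(y)$ satisfies $w>p$, hence $Sw\gg p$, and $S(\omega(y))=\omega(y)$ forces $\omega(y)\gg p$; it remains to derive a contradiction from the assumption that $\omega(y)$ is not a singleton.

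The main step manufactures a proper attractor inside $M$. Put $U=\{v\in M:v\gg p\}$, an open, forward-invariant subset of $M$ (if $v\gg p$ then $v>p$, so $Sv\gg Sp=p$), and let $N$ be the closure of $U$ in $M$, which is compact and forward invariant. Then $A:=\bigcap_{n\ge0}S^n(N)$ is a nonempty compact invariant set attracting all of $N$, and $A\supset\omega(y)$ because $\omega(y)\subset U$ is invariant. Now $N\subset\clos\{v:v\gg p\}\subset\{v:v\ge p\}$, so every point of $N$ other than $p$ is $>p$ and is carried by $S$ into $U$. Hence if $p\notin N$ then $S(N)\subset U$, so $A\subset U\subset\Int_M N$; thus $N$ is a neighbourhood of $A$ in $M$ and $A$ is an attractor for $S|M$, while $p\in M\setminus A$ makes it a \emph{proper} attractor --- contradicting attractor-freeness of $M$.

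There remains the degenerate case $p\in N$, i.e.\ points of $M$ lying $\gg p$ accumulate at $p$. Here one examines the principal eigenvalue $\rho(p)$ of $DS(p)$. If $\rho(p)<1$ the spectral radius of $DS(p)$ is $<1$, so $p$ is asymptotically stable and $\{p\}$ is itself a proper attractor for $S|M$ (proper since $\omega(y)\gg p$ is a nonempty subset of $M$ disjoint from $\{p\}$) --- contradiction. If $\rho(p)>1$, let $\phi\gg0$ span the principal eigendirection; for small $t>0$ the point $z=p+t\phi$ satisfies $Sz-z=t(\rho(p)-1)\phi+o(t)\gg0$, and then $w\ge z\Rightarrow Sw\ge Sz\gg z$, so $S(z_+)\subset\Int(z_+)$. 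Since $z,Sz\to p$ as $t\downarrow0$ and $\omega(y)\gg p$ with a uniform gap, any fixed $v\in\omega(y)$ satisfies $v\gg Sz\gg z\gg p$; the argument of Proposition \ref{th:basic}, applied to $\varepsilon$-chains lying in $M$ (on which $S(z_+\cap M)$ is a compact subset of the open set $\Int(z_+)$), then gives $v\not\leadsto p$, contradicting chain transitivity of $M$.

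The case $\rho(p)=1$ --- the regime associated with p-arcs --- is the genuine obstacle: the clean attractor/subsolution dichotomy above breaks down, and one must invoke Mierczy\'nski's structure theory (a p-arc through $p$ together with its contracting collar, and the behaviour described in Lemma \ref{th:sparcs}) to exclude the coexistence of $\omega(x)=\{p\}$ with a non-singleton $\omega(y)\gg p$. I expect essentially all the difficulty to sit in this neutral case; everything else is routine manipulation of strong monotonicity, unorderedness of $\omega$-limit sets, and the chaining results already established.
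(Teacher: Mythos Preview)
Your argument is incomplete: the case $\rho(p)=1$ is left open, and the proposed rescue via Mierczy\'nski's p-arc machinery is not available here, since that theory produces a p-arc joining two \emph{comparable fixed points} (Theorem~\ref{th:parcs}(a)), whereas at this stage you possess only the single fixed point $p$. Nothing guarantees a p-arc through $p$ merely from $\rho(p)=1$, and Lemma~\ref{th:sparcs} presupposes a stationary p-arc already in hand. So as written the proof has a genuine gap, and the tool you reach for does not close it.

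The paper's proof avoids all spectral analysis by a different choice of trapping region: build it from the \emph{non-singleton} omega limit set rather than from the fixed point. In your notation ($\omega(x)=\{p\}$, $\omega(y)$ not a singleton, $p\le\omega(y)$), set
\[
N=\omega(y)_-\cap M=\{z\in M: z\le w\ \text{for every}\ w\in\omega(y)\}.
\]
Then $p\in N$, so $N\ne\emptyset$; $N$ is compact and forward invariant because $S(\omega(y))=\omega(y)$; and $N$ is a \emph{proper} subset of $M$ because $\omega(y)$, being unordered with more than one point, satisfies $\omega(y)\cap N=\emptyset$. The crucial gain is that every $z\in N$ is now \emph{strictly} below each $w\in\omega(y)$ (equality $z=w$ would place $w\in N$), so strong monotonicity gives $Sz\ll S(\omega(y))=\omega(y)$, i.e.\ $S(N)\subset\Int_M N$. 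Hence $N$ contains a proper attractor for $S|M$, contradicting attractor-freeness --- no case split, no eigenvalues. Your difficulty came from basing the trapping region on the single point $p$: since $Sp=p$ may sit on the boundary of the closure of $\{v\in M:v\gg p\}$, that set is not carried into its own interior, which is precisely what forced your split on $\rho(p)$. (The paper in fact writes out the dual direction, assuming $y$ convergent and $x$ not, with $N=\omega(x)_+\cap M$; your duality remark correctly covers the passage between the two.)
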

\bpr Assume $y$, but not $x$, is convergent; set $\om y=\{q\}$.  Then
 $\om x \le q$, and in fact $\om x < q$ because $\om x$ is unordered
 and not a singleton.  The set
$N={\om x}_+ \cap M$
is compact, forward invariant, nonempty because $q\in N$,
and a proper nonempty subset of $M$ because $\om x\not\subset N$.
Strong monotonicity shows that $S (N)\subset \Int_M N$, implying that
$N$ contains an attractor for $S|M$; contradiction.  \epr

\section{Proof of Theorem \rth{main} for the case $m=1$} \secl{prfm1}
In this section we assume Hypothesis \rth{hyp0} with $m=1$.

\begin{lemma}		\mylabel{th:lem0}
Let $e\in\Epp$.  Then 
\[\dist (e+ \Ep,\;E\setminus \Epp) >0.\]
\end{lemma}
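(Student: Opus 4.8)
The plan is to exploit the fact that $e \gg 0$ means $e$ lies in the \emph{interior} of the order cone $\Ep$, so there is an open ball $B(e,r) \subset \Ep$ for some $r > 0$; I claim $r$ (or any smaller radius) witnesses the desired positive distance. Concretely, I would first fix $r > 0$ with $B(e,2r) \subset \Ep$, which exists because $e \in \Epp = \Int \Ep$. The key geometric observation is that $\Ep$ is a convex cone, so if $B(e,2r) \subset \Ep$ then $B(e + v,\, 2r) \subset \Ep$ for every $v \in \Ep$: indeed $B(e+v,2r) = v + B(e,2r) \subset v + \Ep \subset \Ep$ since $\Ep + \Ep \subset \Ep$. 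In particular every point of $e + \Ep$ has an open $2r$-neighborhood contained in $\Ep$, hence is at distance $\ge 2r$ from the complement $E \setminus \Int \Ep \subset E \setminus \Ep$ (using $\Int\Ep \subset \Ep$, so $E\setminus\Ep \subset E\setminus\Int\Ep$; actually I want the reverse containment of complements — see below).

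More carefully: I need $\dist(e + \Ep,\; E \setminus \Epp) > 0$, where $\Epp = \Int \Ep$. A point $w \in e + \Ep$ satisfies $B(w, 2r) \subset \Ep$ by the above; moreover, since $B(w,2r)$ is open and contained in $\Ep$, we in fact have $B(w,2r) \subset \Int \Ep = \Epp$. Therefore any point at distance $< 2r$ from $w$ lies in $\Epp$, i.e.\ cannot lie in $E \setminus \Epp$. Hence $\dist(w,\, E\setminus\Epp) \ge 2r$ for every $w \in e + \Ep$, and taking the infimum over $w$ gives $\dist(e+\Ep,\, E\setminus\Epp) \ge 2r > 0$. (One should also note $E \setminus \Epp$ is nonempty — e.g.\ $0 \notin \Int\Ep$ since $\Ep$ is a proper cone — so the distance is between two nonempty sets and the statement is not vacuous; if one allows the distance to the empty set to be $+\infty$ this is moot.)

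The only real content here is the translation-invariance step $v + \Ep \subset \Ep$ for $v \in \Ep$, which is just that the order cone is closed under addition — a standing structural assumption on an ordered Banach space. I do not anticipate a genuine obstacle; the one point requiring a line of care is passing from "$B(w,2r) \subset \Ep$" to "$B(w,2r) \subset \Epp$", which follows because an open set contained in $\Ep$ is contained in the interior of $\Ep$. I would write the argument in three or four lines along exactly these lines.
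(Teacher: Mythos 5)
Your proof is correct, and it is essentially the paper's argument in a different guise: both rest on translation invariance of the norm together with $\Ep+\Ep\subset\Ep$, the paper phrasing this as ``$(E\setminus\Epp)-x\subset E\setminus\Epp$ for $x\in\Ep$, so $\dist(e+x,E\setminus\Epp)\ge\dist(e,E\setminus\Epp)>0$,'' while you phrase it as ``the ball $B(e,2r)\subset\Ep$ translates to $B(e+v,2r)\subset\Epp$.'' Your ball formulation also quietly avoids the paper's selection of a nearest point $y$ (which need not exist in an infinite-dimensional space), so it is, if anything, slightly cleaner.
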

\bpr Let $x\in\Ep$ be arbitrary
  and choose $y\in E\setminus \Epp$ so that
 that  
\[\dist (e+x,\, E\setminus \Epp)=||e+x-y||=\dist (e,y-x)\]
  Now $y-x\not\in\Epp$ because $x\in
\Ep$ and $x+ (y-x)=y\not\in\Epp$.  Therefore, setting
\[\alpha=\dist (e,\, E\setminus \Epp)>0\]
we have \[\dist (e+x, E\setminus \Epp)\ge \alpha\]
for all $x\in \Ep$.
\epr

Now suppose $a, b$ are chain equivalent and $a<b$.
\begin{proposition}		\mylabel{th:nccor1}
Suppose $a\le y\le b$ and $S^n y$ is
comparable to $S^{n+1}y$ for some $n\ge 0$.  Then $S^n y=S^{n+1}y$.
Thus no orbit entering in $[a,b]$ is eventually  monotone.
\end{proposition}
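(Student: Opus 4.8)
The plan is to reduce Proposition \rth{nccor1} to Corollary \rth{nccor1a} together with Proposition \rth{basic}. Since $a$ and $b$ are chain equivalent with $a<b$, and $S$ is strongly monotone under Hypothesis \rth{hyp0} with $m=1$, the setup of the ``consequences of chaining'' subsection applies (taking $T=S$, $v=b$, $u=a$). First I would observe that it suffices to treat the case $n=0$: if $S^n y$ is comparable to $S^{n+1}y$, then $z:=S^n y$ lies in $[a,b]$ because $[a,b]$ is forward invariant (by strong monotonicity $a=Sa\le Sz\le Sb=b$ when $z\in[a,b]$, and inductively $S^n y\in[a,b]$ whenever $y\in[a,b]$; note $a,b$ are fixed since they are chain recurrent and... actually $a,b$ chain equivalent does not force them fixed, so instead I use that $a\le y\le b$ with $a,b$ chain recurrent — one must be slightly careful here), and $z$ is comparable to $Sz$; so it is enough to show $z=Sz$ whenever $z\in[a,b]$ is comparable to $Sz$.

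So assume $z\in[a,b]$ and, say, $Sz>z$ (the case $Sz<z$ is dual, using the dual of Proposition \rth{basic} obtained by reversing the order). The goal is to derive a contradiction from $Sz\ne z$, i.e., from $Sz>z$. The key point is to manufacture the hypotheses of Proposition \rth{basic}: I need $b> Sz > z\ge a$ and a positive lower bound $\dist(S(z_+),\,X\setminus\Int(z_+))>0$. The inequality $b>Sz$ holds because $z<b$ gives $Sz\ll Sb = b$ (here I do need $b$ fixed, or at least $Sb=b$; since $b$ is chain recurrent and lies in a compact invariant context this should be arranged — alternatively apply the argument to $S^k b$ for large $k$, or note that in the $m=1$ setting the relevant $a,b$ from Theorem \rth{main} are fixed points). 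Similarly $Sz\gg S a = a$... wait, I only have $z\ge a$, giving $Sz\ge Sa$; combined with $Sz>z\ge a$ this yields $Sz\ge a$ which is what Proposition \rth{basic} needs. The strict relation $z\ge a$ together with $Sz>z$ chains together to give $b>Sz>z\ge a$ as required.

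The main obstacle — and the reason Lemma \rth{lem0} was just proved — is establishing the uniform gap $\dist(S(z_+),\,X\setminus\Int(z_+)) = \eps > 0$. Here is where I would use Lemma \rth{lem0}: since $Sz>z$, strong monotonicity gives $Sz\gg z$... no: $Sz>z$ only gives $S(Sz)\gg Sz$. Better: for any $x\in z_+$, i.e. $x\ge z$, we have $Sx \gg Sz \gg z' $ where I want to compare with $z$. Since $Sz > z$ we have $Sz = z + e$ with $e = Sz - z \in \Ep\setminus\{0\}$; but I actually need $e\in\Epp$, which follows because $S^2 z \gg S z$ so $S^2 z - Sz \in \Epp$, and then working one step in: replace $z$ by $Sz$ throughout (legitimate since $Sz$ is again in $[a,b]$ and comparable to $S^2z$, and $Sz = z$ iff $S^2 z = Sz$). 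So WLOG $Sz \gg z$, say $Sz = z + e$ with $e\in\Epp$. Then for $x\ge z$: $Sx \gg Sz = z + e$, so $Sx - z \in e + \Epp \subset e + \Ep$, and by Lemma \rth{lem0} (applied in $E$, translating $z_+$ to $\Ep$), $\dist(Sx - z,\, E\setminus\Epp) \ge \alpha > 0$ uniformly in $x$; translating back, $\dist(Sx,\, E\setminus\Int(z_+))\ge\alpha$, i.e. $\dist(S(z_+),\, X\setminus\Int(z_+)) \ge \alpha > 0$ (shrinking $\alpha$ if necessary so the ball stays in $X$, using that $X$ is open). Now Proposition \rth{basic} applies with $v=b$, $u=a$, this $z$, and $\eps=\alpha$: it concludes $b\not\leadsto a$, contradicting chain equivalence of $a$ and $b$. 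Hence $Sz=z$, proving the first assertion; the ``Thus'' clause is immediate since an eventually monotone orbit entering $[a,b]$ would have some $S^n y$ strictly comparable to $S^{n+1}y$ inside $[a,b]$, which we have just excluded. I should double-check the boundary bookkeeping (that $S^n y$ genuinely stays in $[a,b]$ and that $a,b$ may be taken fixed), but the structural core is: dual of Proposition \rth{basic} $+$ Lemma \rth{lem0} $+$ a one-step shift to upgrade $>$ to $\gg$.
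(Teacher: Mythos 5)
Your structural core matches the paper's: shift one step so that strong monotonicity upgrades $S^nz<S^{n+1}z$ to $\gg$, use Lemma \ref{th:lem0} (translated by $z$) to get the uniform gap $\dist (S (z_+), X\setminus \Int z_+)>0$, and then contradict chain equivalence via Proposition \ref{th:basic}; indeed your explicit one-step replacement of $z$ by $Sz$ to make $Sz\gg z$ is handled more carefully than in the paper's own write-up, where this point is glossed over. The genuine gap is in how you obtain the upper comparison $v>Sz$. Your primary route takes $v=b$ and argues $Sz\ll Sb=b$, which requires $b$ to be a fixed point. That is not available: the standing assumption is only that $a,b$ are chain equivalent with $a<b$, chain recurrent points need not be fixed, and in the applications (Lemma \ref{th:pr1}, Lemma \ref{th:pr1b}, points of the attractor-free set $K$) $a$ and $b$ are arbitrary chain-equivalent points. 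So both of your suggested justifications --- that $b$ chain recurrent ``in a compact invariant context'' forces $Sb=b$, and that the relevant $a,b$ in the proof of Theorem \ref{th:main} are fixed points --- are false. The same unjustified fixedness underlies your opening reduction via ``forward invariance of $[a,b]$.''

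The repair you mention only in passing (``apply the argument to $S^k b$'') is exactly what the paper does, and it needs to be carried out rather than waved at: keep $z=S^{n+1}y$ and take $u=S^{n+1}a$ and $v$ a suitable forward iterate of $b$ (the paper uses $S^{n+1}b$), so that $u\le z\ll Sz\le v$ follows from $a\le y\le b$ by monotonicity alone, with no fixed-point assumption. Proposition \ref{th:basic} then gives $v\not\leadsto u$, and the contradiction is with the fact that these forward iterates are still chain recurrent and still chain equivalent --- which follows from $a\approx b$, the trivial chain along the orbit from $a$ to $S^{n+1}a$, and the standard fact that a chain recurrent point is chained to from its forward images; this bookkeeping (left implicit in the paper as well) is what replaces your appeal to $Sb=b$. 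As written, your application of Proposition \ref{th:basic} with $v=b$, $u=a$ does not go through, so the argument is incomplete until the iterate version is executed.
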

\bpr Arguing by contradiction, we suppose $S^n y <S^{n+1} y$.  Setting
$S^n y=z,\: u=S^{n+1}a,\: v=S^{n+1}b$, we have
\[u\le z\ll Sz\le v.\] 
From Lemma \rth{lem0} with $e= Sx-x$ we see that 
\[\dist (T (z_+), X\setminus \Int\, z_+)>0\]
 Proposition \rth{basic} gives the contradiction that $u, v$ are
not chain equivalent. 
\epr

\begin{lemma}		\mylabel{th:pr1}
Let $a<x<y<b$ with $\om x=\{p\},\, \om y=\{q\}$.
Then
\[\om a\ll p \ll q \ll \om b,\]
there is a
unique p-arc $J$ with endpoints $p$ and $q$, and any such p-arc
is stationary.
\end{lemma}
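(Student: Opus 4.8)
The plan is to establish the four assertions in turn, leaning on Mierczy\'nski's results (Theorems \rth{mier1}, \rth{parcs} and Lemma \rth{leaves}) together with the ``no eventually monotone orbit'' mechanism of Proposition \rth{nccor1} and Corollary \rth{nccor1a}. First $p\le q$: strong monotonicity gives $S^nx\ll S^ny$ for every $n\ge 1$, and since the relation $\le$ is closed, $p=\lim S^nx\le\lim S^ny=q$. To rule out $p=q$, observe that by Proposition \rth{nccor1} neither $\ga x$ nor $\ga y$ is eventually monotone, their orbits lying in $[a,b]$; so Theorem \rth{mier1} puts both $x$ and $y$ in the \emph{unordered} set of points whose orbits converge non-monotonically to their (common, if $p=q$) limit, which is impossible since $x<y$. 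Hence $p<q$. The set of fixed points in $[p,q]$ is closed and contained in $\clos S([p,q])$, which is compact by Hypothesis \rth{hyp0}(iii); so Theorem \rth{parcs}(a) supplies a p-arc $J$ with endpoints $p$ and $q$, and, $J$ being a p-arc, its endpoints are strongly ordered, whence $p\ll q$.

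For the two outer inequalities: $a<x$ gives $\om a\le p$ exactly as above. If $\om a=\{p\}$, then $a$ converges to $p$, and non-monotonically by Proposition \rth{nccor1}, so $a$ and $x$ would both lie in the unordered set of Theorem \rth{mier1} although $a<x$ --- a contradiction; hence $p\notin\om a$. Therefore every $w\in\om a$ satisfies $w<p$, so $Sw\ll Sp=p$, and since $\om a$ is invariant, $\om a=S(\om a)\ll p$. Dually $q\ll\om b$, and altogether $\om a\ll p\ll q\ll\om b$.

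Next I would show that any p-arc $J$ with endpoints $p,q$ is stationary. Since $J$ is totally ordered, $S(J)=J$, and $S$ is strongly monotone, $S|J$ is a strictly order-preserving homeomorphism of the arc $J$ fixing $p$ and $q$. If it is not the identity, some interior point $w\in J$ has $Sw$ comparable to $w$ with $Sw\ne w$, say $Sw>w$ (the case $Sw<w$ is dual); iterating $S|J$ makes $\ga w$ strictly increasing in the compact arc $J$, so $\ga w\nearrow\ell$ for some fixed point $\ell\in J$ with $w<\ell\le q$. On the other hand, from $\om a\ll p\ll q\ll\om b$ together with $\dist(S^na,\om a)\to 0$ and $\dist(S^nb,\om b)\to 0$ we get, for $N$ large, $S^Na\ll p\le w\le q\ll S^Nb$; moreover $S^Na\approx S^Nb$, chain equivalence being preserved by $S$. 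Then Corollary \rth{nccor1a} (with $v=S^Nb$, $u=S^Na$, $y=w$) forbids $w$ from converging eventually monotonically --- contradicting $\ga w\nearrow\ell$. Hence $S|J$ is the identity and $J$ is stationary.

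Finally, uniqueness. Let $J_1,J_2$ be p-arcs with endpoints $p,q$; by the previous step both consist of fixed points, and $J_2\subseteq[p,q]\subseteq Q(J_1)$ (each point of $J_2$ lies above $p\in{\cal L}(p)$ and below $q\in{\cal L}(q)$). By Lemma \rth{leaves}(c), every fixed point lying in the contracting collar $B(J_1)$ lies on $J_1$; and by Proposition \rth{pnbd}, $B(J_1)$ is a neighborhood of $J_1$ in $Q(J_1)$, hence a neighborhood in $J_2$ of every point of $J_1\cap J_2$. Thus $J_1\cap J_2$ is open --- as well as closed and nonempty --- in the connected set $J_2$, so $J_2\subseteq J_1$, and by symmetry $J_1=J_2$. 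I expect the genuine obstacle to be the stationarity step, specifically the need to bracket an orbit lying inside $J$ between a truly chain-equivalent pair in order to apply the monotone-orbit obstruction: this is precisely what forces one first to establish $\om a\ll p\ll q\ll\om b$ and then to push $a$ and $b$ forward under $S$ until their orbits clear $p$ from below and $q$ from above. A minor point is the claim that a p-arc has $\ll$-ordered endpoints (used to deduce $p\ll q$), which the surrounding text already treats as routine.
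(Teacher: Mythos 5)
Your proof is correct and, in its spine, is the paper's own argument: the chain of inequalities $\om a\ll p\ll q\ll \om b$ comes from playing Theorem \rth{mier1} against Proposition \rth{nccor1}, existence of the p-arc from Theorem \rth{parcs}(a) (you rightly check compactness of the fixed-point set via order compactness, which the paper glosses over), and stationarity from the ban on eventually monotone convergence between chain-equivalent points. Where you differ is in filling two steps the paper leaves terse. For stationarity the paper just cites Corollary \rth{nccor1a}; you supply the bracketing pair by pushing $a,b$ forward until $S^Na\ll p\le w\le q\ll S^Nb$ and invoking preservation of chain equivalence under iteration --- the same tacit fact the paper uses in proving Proposition \rth{nccor1}, where $S^{n+1}a$ and $S^{n+1}b$ are asserted to be chain equivalent --- so your version is more explicit, at the cost of a routine uniform-continuity/shifting argument for that claim. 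For uniqueness the paper appeals directly to strong monotonicity, and a lighter argument than your collar-based one is available: for $y\in J_1$ the maximal point $z$ of $J_2$ with $z\le y$ must equal $y$, since otherwise $z=Sz\ll Sy=y$ and points of $J_2$ just above $z$ would still lie below $y$; your route through Lemma \rth{leaves} and Proposition \rth{pnbd} is nevertheless valid, since fixed points in the collar $B(J_1)$ must lie on $J_1$ and $J_2\subset Q(J_1)$. One wording slip: from ruling out $\om a=\{p\}$ you conclude $p\notin\om a$; this needs the fact (stated in the Introduction, and used implicitly by the paper at the same spot) that $\om a$ is unordered and $\le p$, so that $p\in\om a$ would force $\om a=\{p\}$.
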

\bpr  By monotonicity,
\[\om a \le p \le q \le b\]
Assume $\om a \not \ll p$; then $\om a=\{p\}$ or $\om a <p$.  But the
latter entails $\om a\ll p$ by strong monotonicity and invariance of
$\om a$, so necessarily $\om a = \{p\}$.  Now Theorem \rth{mier1}
implies that either $\ga a$ or $\ga x$ converges monotonically,
contradicting Proposition \rth{nccor1}.  This proves $\om a \ll p$,
and similar arguments prove $p \ll q \ll \om b$.

Theorem \rth{parcs} yields a p-arc joining $p$ to $q$.  By Lemma
\rth{nccor1a}, no orbit in $J$ can converge monotonically.  As $J$ is
totally ordered and invariant, it follows that  $J$ is
stationary.  Uniqueness of $J$ follows easily from strong monotonicity.
 \epr

\begin{lemma}		\mylabel{th:pr1a}
Let $a\le u <v\le b$.  Then there is a stationary p-arc $J$  such that
$\om u  \ll J \ll \om v$.
\end{lemma}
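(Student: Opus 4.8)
\medskip\noindent\textbf{Proof strategy.}
The plan is to squeeze two convergent points between suitable images of $u$ and $v$ and then invoke Lemma \rth{pr1}. Since $u<v$, strong monotonicity gives $Su\ll Sv$, and from $a\le u$ and $v\le b$ we get $Sa\le Su$ and $Sv\le Sb$. The first thing to record is that $Sa\approx Sb$: since $a$ (hence $b$) is chain recurrent and $\Omega(a,S)=\Omega(b,S)$ is invariant, $Sa$ and $Sb$ remain in a single chain-equivalence class. Because $m=1$, Theorem \rth{gen} provides an open dense set of convergent points, so one may choose convergent $x,y$ with $\om x=\{p\}$, $\om y=\{q\}$ and $Su\ll x\ll y\ll Sv$.

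Then $Sa\le Su\ll x\ll y\ll Sv\le Sb$, so $Sa<x<y<Sb$ with $Sa\approx Sb$ and $Sa<Sb$, and Lemma \rth{pr1} applies to this chain. It yields $\om{Sa}\ll p\ll q\ll\om{Sb}$ and a unique stationary p-arc $J$ with endpoints $p$ and $q$; being totally ordered, $J\subset[p,q]$. Also $x,y\in[Sa,Sb]\subset X$ by order convexity, while $u,v\in[a,b]\subset X$.

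It remains to improve the outer inequalities to $\om u\ll p$ and $q\ll\om v$, which give $\om u\ll J\ll\om v$ since $p=\min J$ and $q=\max J$. For the first: monotonicity gives $\om u=\om{Su}\le p$; if this is not strict, then (using that $\om u$ is unordered, together with strong monotonicity and invariance of $\om u$) one forces $\om u=\{p\}$, so $\ga u$ and $\ga x$ both converge to $p$. As $u$ and $x$ are comparable ($Su\ll x$), Theorem \rth{mier1} forces one of $\ga u$, $\ga x$ to converge eventually monotonically; but $u\in[a,b]$ and $x\in[Sa,Sb]$, so Corollary \rth{nccor1a}, applied with the chain-equivalent pair $a,b$ and again with $Sa,Sb$, forbids this — a contradiction. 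Hence $\om u\ll p$, and the dual argument gives $q\ll\om v$; combining, $\om u\ll p\le J\le q\ll\om v$.

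The step I expect to be the main obstacle is the very first: verifying $Sa\approx Sb$. This is exactly what is needed so that the chosen $x,y$ lie in an order interval with chain-equivalent endpoints, making Corollary \rth{nccor1a} (equivalently Proposition \rth{nccor1}) available for $\ga x$ and $\ga y$; it is the one place where the non-invertibility of $S$ must be handled with care, and the cleanest route is the invariance of $\Omega(a,S)$ noted in Section \rsec{prelim}. Granting it, the remainder is a routine reprise of the proof of Lemma \rth{pr1} with $u$ and $v$ in the roles of the endpoints.
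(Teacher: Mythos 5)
Your argument is correct and is essentially the paper's own proof: choose convergent $x,y$ with $Su\ll x\ll y\ll Sv$ (Theorem \rth{gen}, strong monotonicity, $m=1$) and apply Lemma \rth{pr1}; you merely make explicit the bookkeeping the paper leaves tacit, namely that $Sa\approx Sb$ so the lemma applies to the bracketing pair $(Sa,Sb)$, and that rerunning the Mierczy\'nski/non-monotone-convergence step upgrades the conclusion to $\om u\ll p$ and $q\ll \om v$. One small caveat: invariance of $\Omega(a,S)$ by itself does not literally give $Sa\approx Sb$ (that set need not be a single chain class); the clean justification shifts an $\eps$-chain from $a$ to $b$ forward one step using continuity of $S$ --- the same fact the paper uses implicitly for $S^{n+1}a,\,S^{n+1}b$ in Proposition \rth{nccor1} --- so the claim is true and routine.
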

\bpr Choose convergent points $x,y$ such that $Su\ll x \ll y \ll Sv$
(by strong monotonicity, Theorem \rth{gen} and the assumption $m=1$).
Set $\om x=\{p\}, \om y=\{q\}$ and apply Lemma \rth{pr1}.  \epr

\begin{lemma}		\mylabel{th:pr1b}
Every point of $[a,b]$ is convergent.  If $a\le x<y\le b$ then $\om x
\ll \om y$.
\end{lemma}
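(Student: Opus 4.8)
\smallskip
\noindent The plan is as follows. The second assertion needs no convergence: for $a\le x<y\le b$, Lemma \rth{pr1a} supplies a nonempty stationary p-arc $J$ with $\om x\ll J\ll\om y$, so picking $j\in J$ gives $\om x\ll j\ll\om y$ and hence $\om x\ll\om y$ (the sum of two vectors $\gg 0$ is $\gg 0$). So I must show every $w\in[a,b]$ is convergent, which I would do in two stages.

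\smallskip
\noindent\emph{Interior points.} Fix $a<w<b$. By strong monotonicity $Sa\ll Sw\ll Sb$, so the open sets $[[Sa,Sw]]$ and $[[Sw,Sb]]$ are nonempty; since $m=1$, Theorem \rth{gen} makes the convergent points dense, and I pick convergent $x\in[[Sa,Sw]]$, $y\in[[Sw,Sb]]$, with $\om x=\{p\}$, $\om y=\{q\}$. From $x\ll Sw$ strong monotonicity gives $S^{n}x\ll S^{n+1}w$ for all $n\ge 1$; letting $n\to\infty$, using $S^{n}x\to p$ and closedness of the cone, I obtain $p\le r$ for every $r\in\om w$, and symmetrically $r\le q$. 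If $p=q$, then $\om w=\{p\}$ already. Otherwise I apply Lemma \rth{pr1} --- whose proof uses only that the endpoints are chain equivalent and comparable, so it applies to the chain equivalent pair $Sa<Sb$ --- to the convergent points $x<y$ in $(Sa,Sb)$; this gives $p\ll q$ and a stationary p-arc $J$ with endpoints $p<q$, so $p=\inf J$. Since $\om w\le q\in J$ we have $\om w\subset J_\bullet$, so Lemma \rth{sparcs} yields that $\om w\ll J$ or $\om w$ is a singleton in $J$. The first is impossible, for it would give $r\ll p$ for each $r\in\om w$, contradicting $p\le r$. Hence $\om w$ is a singleton.

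\smallskip
\noindent\emph{The endpoints.} I treat $w=a$ (the case $w=b$ is dual). Put $w_n=a+\frac1n(b-a)$ for $n\ge 2$; then $a<w_n<b$ and $w_n$ decreases to $a$. By the previous step $\om{w_n}=\{p_n\}$, the order statement gives $p_{n+1}\ll p_n$, and Lemma \rth{pr1a} applied to $(a,w_n)$ gives $\om a\ll p_n$; fixing $\alpha\in\om a\subset X$, the $p_n$ then all lie in the closed order interval $[\alpha,p_2]\subset X$. By order compactness this interval has relatively compact image, and $p_n=S(p_n)$ lies in it; a decreasing sequence with relatively compact closure converges, so $p_n\to p_*$ with $S(p_*)=p_*$. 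From $a\le w_n$ we get $S^{k}a\le S^{k}w_n$ with $S^{k}w_n\to p_n$, whence $\om a\le p_n$ for every $n$ and so $\om a\le p_*$; upper semicontinuity of the omega-limit map (valid under Hypothesis \rth{hyp0}) gives $p_*\in\om a$. As $\om a$ is unordered, $\om a=\{p_*\}$.

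\smallskip
\noindent The hard part is the interior step: manufacturing, for each $w$, a stationary p-arc whose bottom endpoint sits below $\om w$, so that the alternative ``$\om w\ll J$'' of Lemma \rth{sparcs} can be excluded --- that lemma says nothing useful about a set lying strictly below the arc. The endpoint case is the other delicate point, resting on order compactness to pass $(p_n)$ to a limit and on the upper semicontinuity of $\om$.
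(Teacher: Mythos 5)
Your second assertion and your treatment of interior points are fine (and take a different route from the paper: you sandwich $\om w$ between the limits $p,q$ of convergent points chosen in $[[Sa,Sw]]$ and $[[Sw,Sb]]$, then use Lemma \rth{pr1} together with Lemma \rth{sparcs} to force $\om w$ to be a singleton; the paper instead argues by contradiction, taking a minimal fixed point above the nonconvergent limit set and using Lemma \rth{pr1a} on a new chain-equivalent comparable pair to violate minimality). Your transfer of Lemma \rth{pr1} from the pair $(a,b)$ to $(Sa,Sb)$ is legitimate, since $a\approx b$ does imply $Sa\approx Sb$, though you should say a word about why.

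The genuine gap is the endpoint case. The step ``upper semicontinuity of the omega-limit map (valid under Hypothesis \rth{hyp0}) gives $p_*\in\om a$'' is not a theorem: the map $x\mapsto\om x$ is not upper semicontinuous in general (already a repelling equilibrium inside an attracting cycle defeats it), and Hypothesis \rth{hyp0} does not restore it --- indeed Theorem \rth{BH} of the paper describes exactly the phenomenon that kills it in this class: points slightly above a nontrivial unordered limit set can converge to a lower attracting fixed point lying strictly above that set. From what you have actually proved ($\om{w_n}=\{p_n\}$, $p_n\downarrow p_*$ a fixed point, and $\om a\le p_*$) nothing excludes the possibility that $\om a$ is a nontrivial unordered invariant set with $\om a\ll p_*$; ruling that out is the whole content of the claim for $w=a$, and your interior argument cannot be adapted, because at the bottom endpoint there is no convergent point inside the bracketing interval lying below $Sa$ to pin $\om a$ from below, so the alternative $\om a\ll J$ in Lemma \rth{sparcs} cannot be excluded. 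The paper closes exactly this hole by a minimality argument: if $\om x$ ($a\le x<b$) is not a singleton, take a minimal fixed point $p$ with $\om x\ll p\le \om b$, note that any $u\in\om x$ and $v\in\om b$ are again chain equivalent and comparable, and apply Lemma \rth{pr1a} to this new pair to produce a fixed point $q$ with $u\ll q\ll p$, contradicting minimality. You need an argument of this kind (or some other use of the chain equivalence of $\om a$ with $\om b$) to handle $w=a$ and, dually, $w=b$.
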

\bpr
Suppose for example that $a\le x<b$ and $x$ is not convergent, so that
$\om x$ is a 
compact unordered invariant set containing more than one point.  By
Lemma \rth{pr1a} and
the compactness assumption (Hypothesis \rth{hyp0}(v)) there is a
minimal fixed point $p$ satisfying 
\[\om x \ll p \le \om b\]
Pick any $u\in \om x, v \in \om b$.  Then $u\ll p\le v$ and $u$ is
chain equivalent to $v$.  From Lemma \rth{pr1a} we find a fixed point
$q, u \ll q \ll p$.  As this contradicts minimality of $p$, it follows
that $x$ is convergent.  The last sentence is a consequence of Lemma
\rth{pr1a}.  \epr

In the rest of this section we assume the attractor-free set $K$ of
Theorem \rth{main} is not unordered.  Therefore by Lemma \rth{pr1b} we
can select $p\ll q \in K$ with the following properties:

\begin{itemize}
\item $p$ and $q$ are respectively maximal and minimal fixed points in
$K$

\item Every point of $[p,q]$ is convergent

\item If $u<v$ in $[p,q]$ then the trajectories of $u$ and $v$ converge to
distinct fixed points.

\end{itemize}

\begin{lemma}		\mylabel{th:pr5}
$p$ is a minimal point of  $K$ and $q$ is a maximal point of $K$.
\end{lemma}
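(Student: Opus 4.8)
The plan is to argue by contradiction, proving that $p$ is a minimal point of $K$; the assertion that $q$ is a maximal point is entirely dual. So suppose some $w\in K$ satisfies $w<p$. Since $K$ is a compact, invariant, attractor-free set and the fixed point $p$ is convergent ($\om p=\{p\}$), Proposition~\ref{th:nccor4}, applied to the pair $w<p$ in $K$, shows that $w$ is convergent, say $\om w=\{r\}$. Because $K$ is closed and invariant, the closure of the orbit of $w$ lies in $K$, so $r\in K$; and from $S^n w\le S^n p=p$ for all $n$, passing to the limit gives $r\le p$. Recalling that $p$ was chosen to be a minimal fixed point of $K$, the fixed point $r\in K$ with $r\le p$ must satisfy $r=p$, so $\om w=\{p\}$.

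Now the chaining hypotheses enter. As $K$ is chain transitive, $w$ and $p$ are chain equivalent, and trivially $p\ge w\ge w$, so Corollary~\ref{th:nccor1a} forbids $\ga w$ from converging eventually monotonically. Hence $w$ belongs to the set of points whose orbit converges to $p$ but not eventually monotonically. The constant orbit of $p$ is, vacuously, not eventually monotone in the sense of the definition, so $p$ also belongs to that set; since $w<p$, this contradicts Mierczy\'nski's Theorem~\ref{th:mier1}, which states the set is unordered. Therefore no such $w$ exists and $p$ is a minimal point of $K$. The symmetric argument — now using that $q$ is a maximal fixed point of $K$, Proposition~\ref{th:nccor4} applied to $q<w$, Corollary~\ref{th:nccor1a} with $w\ge w\ge q$, and Theorem~\ref{th:mier1} at $q$ — shows $q$ is a maximal point of $K$.

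The one genuinely non-formal step, I expect, is this last contradiction: knowing $\om w=\{p\}$ does not by itself prevent $\ga w$ from oscillating on its way to $p$, and it is precisely Mierczy\'nski's unorderedness theorem — paired with the trivial observation that $p$'s own constant orbit is a non-monotone converger to $p$ — that excludes a comparable point $w<p$ of this type. Everything else (convergence of $w$, that $r$ is a fixed point lying in $K$, the inequality $r\le p$, and the reduction $r=p$) is routine bookkeeping with strong monotonicity and the standing compactness hypotheses.
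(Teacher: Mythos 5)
Your argument is correct in substance, but it takes a genuinely different (and longer) route than the paper, and its last step deserves a caution. The paper disposes of the lemma in one stroke: given $u\in K$ with $u<p$, the pair $u,p$ is chain equivalent because $K$ is chain transitive, so Lemma \ref{th:pr1a} (or just Lemma \ref{th:pr1b}) applies to $[u,p]$ and gives $\om u \ll \om p=\{p\}$; thus $\om u$ is a single fixed point of $K$ strictly below $p$, contradicting the choice of $p$ as a minimal fixed point in $K$. You instead use Proposition \ref{th:nccor4} together with minimality to force $\om w=\{p\}$, and then exclude that case via Corollary \ref{th:nccor1a} and Theorem \ref{th:mier1}. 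The fragile point is the appeal to Theorem \ref{th:mier1}: you need $p$ itself to belong to the set of points ``converging to $p$ but not monotonically.'' Under the paper's strict-inequality definition of eventual monotonicity the constant orbit at $p$ is indeed not eventually monotone, so your reading is literally consistent with the stated theorem; but it is a degenerate use of Mierczy\'nski's result, and what you are really invoking is the stronger-sounding fact that no point comparable to $p$ can converge to $p$ non-monotonically. You can remove this reliance entirely: Lemma \ref{th:pr1b} applied to the chain-equivalent pair $w<p$ yields $\om w\ll \om p=\{p\}$ directly, which at once rules out $\om w=\{p\}$ and produces the fixed point below $p$ that contradicts minimality --- in effect the paper's argument, making Proposition \ref{th:nccor4}, Corollary \ref{th:nccor1a} and Theorem \ref{th:mier1} unnecessary here. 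Finally, note that both your proof and the paper's own proof read the selection of $p,q$ as $p$ minimal and $q$ maximal among the fixed points in $K$; the bullet list preceding the lemma says the reverse, which is evidently a typo, so your premise matches the author's intent.
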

\bpr
Suppose there exists $u\in K, u<p$.  Then Lemma \rth{pr1a} yields a
stationary p-arc $J$ such that 
\[\om u\ll J \ll \om p=\{p\}\]
But $\om u$ is a singleton, contradicting  minimality of $p$.
\epr

\begin{lemma}		\mylabel{th:pr4}
The set of fixed points in $[p,q]$ is a stationary p-arc $G$ with
endpoints $p, q$.
\end{lemma}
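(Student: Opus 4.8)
The plan is to extract the arc from Theorem~\rth{parcs}(a), verify it is stationary, and then show it already contains every fixed point of $[p,q]$; write $F$ for the set of those fixed points. First I would check $F$ is compact: it is closed in $X$, and since $F=S(F)\subset S([p,q])$, order compactness (Hypothesis~\rth{hyp0}(iii)) places $F$ inside the compact set $\clos S([p,q])$, so the closed set $F$ is compact. Theorem~\rth{parcs}(a), applied to the fixed points $p<q$, then yields a p-arc $G$ with endpoints $p$ and $q$; since its $C^1$ parametrization has derivative $\gg 0$ it is increasing, so $G\subset[p,q]$.

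Next I would show $G$ is stationary. As $G$ is totally ordered with $S(G)=G$, for each $g\in G$ the points $g$ and $Sg$ both lie in $G$ and hence are comparable; since $p\le g\le q$, Proposition~\rth{nccor1} (with $a,b$ taken to be $p,q$) forces $Sg=g$. Thus $G$ consists of fixed points, i.e.\ $G\subset F$.

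It remains to prove $F\subset G$, which is the substantive step; the right tool is Lemma~\rth{sparcs} applied to the stationary p-arc $G$. Given $r\in F$, we have $\om r=\{r\}$, and since $r\le q$ with $q\in G$ it follows that $r\in G_\bullet$, so $\om r\cap G_\bullet\ne\emptyset$. Lemma~\rth{sparcs} then yields either $\om r\ll G$ or $\om r$ a singleton lying in $G$. The first alternative would give $r\ll p$ (taking the endpoint $p\in G$), contradicting $r\ge p$; hence $r\in G$. Therefore $F=G$, a stationary p-arc with endpoints $p$ and $q$. The main obstacle is recognizing that Lemma~\rth{sparcs} is exactly what is needed here: once one observes that every fixed point of $[p,q]$ lies weakly below the top endpoint of $G$ and so meets $G_\bullet$, the dichotomy in that lemma does all the remaining work; the compactness bookkeeping and the stationarity of $G$ are routine consequences of the earlier results.
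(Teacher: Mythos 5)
Your proof is correct, but it takes a genuinely different route from the paper's in both halves. For producing the arc, the paper simply quotes Lemma \rth{pr1a} to obtain a stationary p-arc $G$ joining $p$ to $q$, whereas you rebuild it from Theorem \rth{parcs}(a) after checking compactness of the fixed-point set in $[p,q]$, and you then deduce stationarity from Proposition \rth{nccor1} (legitimate, since $p,q$ lie in the chain transitive set $K$ and hence are chain equivalent --- a hypothesis of that proposition you use tacitly and should state); this part is, if anything, more explicit than the paper's one-line citation. The real divergence is the containment step $F\subset G$. The paper argues directly from strong monotonicity: given $x\in[p,q]\setminus G$, pick a minimal $y\in G$ with $y>x$; minimality forces $y\not\gg x$, while $y=Sy\gg Sx$, so $Sx\ne x$. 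You instead invoke Lemma \rth{sparcs}: for a fixed point $r\in[p,q]$ one has $\om r=\{r\}$ meeting $G_\bullet$ (via $r\le q\in G$), and the alternative $\om r\ll G$ is killed by $r\ge p$, leaving $r\in G$. Both arguments are sound. The paper's is a two-line elementary consequence of strong monotonicity and needs nothing beyond the arc itself; yours leans on the heavier contracting-collar machinery underlying Lemma \rth{sparcs}, but in exchange it is a clean application of an already-stated dichotomy and avoids the slightly delicate minimal-element argument ($y$ exists and $y\not\gg x$) that the paper leaves terse.
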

\bpr By Lemma \rth{pr1a} there is a stationary p-arc $G$ joining $p$
to $q$.  Choose $x\in [p,q]\setminus G$; we show $Sx\ne x$.  There is
a minimal $y\in G$ such that $y>x$; then $y\not\gg x$.  Since $y=Sy\gg
Sx$, it follows that $Sx\ne x$.  \epr

Let $B (G)\subset X$ be a contracting collar for $G$ (see Theorem
\rth{parcs}).  I claim $B(G)\cap K$ is a neighborhood of $G\cap K$ in
$K$.  If not, there is a sequence $\{x_n\}$ in $K\setminus B(G)$
converging to an endpoint $p\in G$, by Lemma \rth{pnbd3}.  We assume
$p=\inf G$.  There exists $y\in {\cal L} (p)$ and $k$ such that $x_k <
y$.  Now $\om y=p$ by Lemma \rth{leaves}(c).  Therefore $\om {x_k} \ll
p$ by Theorem \rth{mier1}.  But this contradicts Lemma \rth{pr5}.

It now follows from Lemma \rth{pnbd4} that $G\cap K$ is an attractor
for $S|K$.  Since $K$ is attractor-free, we have proved $K=G\cap K$.
Since $K$ is an attractor-free set of stationary points, it is
connected.  Thus $K$ is a stationary p-arc, showing that either (a) or (b) of
Theorem \rth{main} holds when $m=1$.

Now assume $S$ is real analytic and dissipative.  We show there cannot
be a stationary p-arc $J$.  If there is, by Zorn's lemma there exists
a set $L\subset X$ that is a connected, totally ordered set of
stationary points containing $J$, and which is setwise maximal in
these properties.  Then $L$ is compact because $S$ is dissipative,
whence $L$ is an arc (Wilder \cite{Wi49}).  But there can be no
totally ordered compact arc of fixed points when $S$ is real analytic
(Jiang \& Yu \cite{JY96}, Lemma 3.3 and Theorem 2).  Therefore Theorem
\rth{main}(a) holds.

\section{Proof of Theorem  \rth{main} for the case $m>1$}
\mylabel{sec:red}  
 Assume $m>1$.  Pick an arbitrary $a\in K$ and set 
$L(a)=\Omega (a, S^m|K)$.  This compact subset of $K$
is attractor-free for $S^m$, and it can be shown that
\[S^i L(a)=L (S^i a)=L (S^{i+m} (a))\]
Because $S|K$ is chain transitive, 
$K=\bigcup_{0\le i<m}S^i L(a)$.  
Moreover if $S^i L(a)$ and $S^j L(a)$ intersect, they coincide.

With $a$ chosen once and for all, set 
$K_i= S^i L(a), 0\le i \le m-1$.  Then
$K_i= S^i K_0$, and $K=\bigcup_{0\le i<m}K_i$.

Suppose there exist $x\in K_j, y\in K_k$ with $x<y, j\ne k$; we will
reach a contradiction.  
Relabel the $K_i$ so that $j=0$; then $1\le k\le m-1$, and $y= S^k u,
u\in K_0$.  Therefore the set
\[M=\{x\in K_0: :\:\:\exists\:u\in K_0,
k\in\{1,\ldots,m-1\}\quad\mbox{with}\quad 
x<S^n y\}\] 
is nonempty. $M$ is invariant under $S^m$ and relatively open in $K_0$
by strong monotonicity.  On the other hand, one can also prove $M$
compact.  Therefore $M$ is an attractor for $S^m|K_0$, which implies
$M=K_0$. 

Compactness and strong monotonicity now imply there exists a smallest
$k, 1\le k <m$ such that  $S^k K_0\gg K_0$.
   Therefore
\[K_0=S^{m-k}S^k K_0\gg S^{m-k}K_0\]
Thus $k\le m-k$, and an induction leads to the absurdity that $1\le
b\le k-lm$ for all $l  \in \Np$.  Therefore $M$ is empty, proving that
no points of different $K_i$ are be comparable.  

Since we proved Theorem \rth{main} for $S^m$, it follows that each
$K_i$ is either a totally ordered arc of fixed points for $S^m$,
or else it is unordered.  In both cases the conclusion of Theorem
\rth{main} for $S$ follows.

\begin{remark}		\mylabel{th:rem1}
The proof of Theorem \rth{main} contains the following result for chain
transitive sets that are not necessarily internally chain transitive:

\medskip
\noindent
{\em Let $S$ satisfy Hypothesis \rth{hyp0}, with generic asymptotic period
$m\ge 1$.  Let $L\subset E$ be a compact chain transitive set. Given
$a<b$ in $L$, there is a stationary p-arc $J$ for $S^m$ 
such that if $a\le x \le b$ then $\om x = \ga p$ for some $p\in J$.}

\medskip
\noindent
I suspect, but cannot prove, that  Theorem \rth{main}
is not true under the weaker hypothesis that $K$ is merely chain
transitive.  
\end{remark}

\section{Proof of Theorem \rth{BH}} \secl{prfBH}
 (Adapted from Bena\"{\i}m and Hirsch \cite{BH97}.)  We assume $K$ is
not an upper attracting $m$-periodic orbit.  Consider the case $m=1$.
We first prove that if $x\in K^\bullet \setminus K$, then there is a
fixed point $p$ such that
\[\om x \ge \{p\}\gg K.\] 
Replacing $x$ by  $Sx$, 
we assume $x\gg y\in K$. By Theorem
\rth{gen} we choose $z\in [[y,x]]$ such that $\om z$ is a singleton
$\{p\}$.  Then
\[ \om x\ge p\ge \om y.\]  

I claim $p\not\in K$.  For if $p\in K$  then $\om y =\{p\}$ because $K$ is
unordered, and therefore 
$\ga z \searrow p$ by Theorem \rth{mier1}.  Let $n\ge 0$ be such that
$S^n z \gg p$.  Thus $p$ is an upper attracting fixed point.  Set 
\[N=\{w\in K: w\le S^nz\}\]
 Strong monotonicity implies that $\om w=\{p\}$ for all $w\in N$,
and also that $S$ maps the compact set $N$ into its relative interior
in $K$.  This implies $N$ contains an attractor for $S|K$.  Therefore
$N=K$, yielding the contradiction that that $K=\{p\}$.   Thus $p\not\in K$.

The set $C=\{u\in K:u< p\}$ is a nonempty, forward invariant compact
set.  Strong monotonicity implies $S$ maps $C$ into its relative
interior in $K$, which is $\Int_K C=\{u\in K:u\ll p\}.$ Since $K$ is
attractor-free, $C=K$.  Therefore $\om x \ge p \gg K$.  

 The compactness assumption Hypothesis \rth{hyp0}(v) implies there is a
minimal fixed point $q\gg K$.
 Suppose 
$x\in 
\lfloor K,p_1\rceil \setminus K.$
  We saw
above that $\ga x$ converges to a fixed point $\gg K$.  Since $\ga x
\le q$ by monotonicity, $\om x =\{q\}$.  Taking $x$ sufficiently close
to $K$ proves $q$ lower attracting. 

Suppose $m>1$.  Then $K$ has the partition 
\[K=K_1\cup\cdots \cup K_d\]
where $d|m$, each $K_i$ is attractor-free for $S^m$ and $S$ cyclically
permutes the $K_i$.  Since the generic asymptotic period for $S^m$ is
$1$, the case already proved yields a fixed point  $q$ for $S^m$ 
satisfying  \rth{BH} for $S^m$.  It is easy to see this $q$ 
satisfies Theorem \rth{BH}. \qed

\section{Proofs of theorems on semiflows} \secl{prfsemi}
The proof of Theorem \rth{semi} is almost the same as that of Theorem
\rth{main}.  In place of Tere\v{s}\v{c}\'ak's theorem \rth{gen} one
uses the following result:

\begin{theorem}	[\sc Smith-Thieme \cite{ST91a}]	\mylabel{th:gen2}
There is a dense open set of points whose trajectories converge to
 equilibria.
\end{theorem}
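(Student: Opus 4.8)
\medskip
\noindent\textbf{Proof proposal.}
The plan is to reduce this to Tere\v{s}\v{c}\'ak's theorem \rth{gen} applied to the time-one map $S_1$, which satisfies Hypothesis \rth{hyp0} by Hypothesis \rth{hyp1}. Thus there are an integer $m\ge1$ and an open dense set $D\subset X$ such that $\om{x,S_1}$ is, for each $x\in D$, an $m$-periodic orbit $\{p_0,\dots,p_{m-1}\}$ of $S_1$. The first step is to identify the \emph{semiflow} omega limit set of such an $x$: writing a large time as $t=k+s$ with $k\in\NN,\ s\in[0,1)$, and passing to subsequences, one sees that every limit point of $\{S_tx\}_{t\ge0}$ has the form $S_sp_j$ and, conversely, that every such point is a limit point; since $S_1p_j=p_{j+1}$ and $S_mp_0=S_1^mp_0=p_0$, it follows that $\om x=\clos\{S_tp_0:t\ge0\}$ is a periodic orbit $\Gamma$ of $\mathbf S$, of period dividing $m$ (possibly an equilibrium). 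It therefore suffices to show that any periodic orbit arising in this way is a single equilibrium; openness and density of $D$ then give the theorem.

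So fix such a $\Gamma$. Being an omega limit set it is unordered. Suppose, for contradiction, that $\Gamma$ is not a point; let $T>0$ be its minimal period and $f$ the generating vector field, so $f\ne0$ along $\Gamma$. The period map $P:=S_T$ fixes $\Gamma$ pointwise, and differentiating $P(S_tq)=S_tq$ at $t=0$ gives $DP(q)f(q)=f(q)$ for each $q\in\Gamma$, so $1$ is an eigenvalue of the strongly positive operator $DP(q)$. Let $\rho(q)$ be the spectral radius of $DP(q)$. By the Krein--Rutman structure recalled before Theorem \rth{parcs}, $\rho(q)$ is a simple eigenvalue with eigenvector $\gg0$, exceeds the modulus of every other spectral point, and the complementary invariant subspace satisfies $E_2(q)\cap\Ep=\{0\}$. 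Hence $\rho(q)\ge1$, and we distinguish two cases. If $\rho(q)=1$ for some $q$, then $f(q)$ spans the principal eigendirection $E_1(q)$, so $f(q)\gg0$ or $f(q)\ll0$; since $\{u:u\gg0\}$ is open, following $\Gamma$ from $q$ in the direction $f(q)$ produces points $>q$ on $\Gamma$, contradicting unorderedness. So we may assume $\rho(q)>1$ for \emph{every} $q\in\Gamma$.

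This last case is the heart of the matter, and I expect it to be the main obstacle. Here $\Gamma$ is strictly, and uniformly on the compact set $\Gamma$, expanding in the principal direction for $P=S_T$: with $e(q)\gg0$ the principal eigenvector, $q_\eps:=q+\eps e(q)\gg q$ satisfies $Pq_\eps\gg q_\eps$ for small $\eps>0$, so its $P$-iterates increase monotonically to a $P$-fixed point $\bar q_\eps\gg q$ whose semiflow orbit is a periodic orbit lying strictly above $\Gamma$. Thus $\Gamma$ fails to attract points just above it; since the set of points whose forward orbit converges to a principal-direction-expanding invariant set has empty interior while $D$ is open, points of $D$ near $\Gamma$ must escape from small neighbourhoods of $\Gamma$, so their semiflow omega limit sets are not contained in $\Gamma$. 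I would use this, together with the dichotomy above applied to those limit sets and the compactness of the orbit closures involved, to rule out $\rho>1$ on $\Gamma$ altogether. The cleanest route I see is to establish first the auxiliary ``monotone convergence'' principle --- if $y\le S_r y$ for some $r>0$ then $\om y$ is a single equilibrium, which is where strong (not merely plain) monotonicity must be used in earnest and which is essentially equivalent to the obstacle --- and then to combine it with the structure of $D$; an alternative is to pass to a continuity point $x\in D$ of $z\mapsto\om z$, where $\om x=\Gamma$ must be Lyapunov stable and hence cannot carry an expanding principal direction. Once this case is disposed of, $\om x$ is a single equilibrium for every $x\in D$, and the theorem follows.
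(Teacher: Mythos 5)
Your proposal is not a proof: the case you yourself flag as ``the heart of the matter'' is exactly the missing content, and nothing you sketch closes it. After reducing via Tere\v{s}\v{c}\'ak's theorem applied to $S_1$, you must exclude that a point $x$ in the open dense set $D$ has as its semiflow limit set a \emph{nonconstant} periodic orbit $\Gamma$ with $\rho(q)>1$ along $\Gamma$. Such orbits genuinely exist for strongly monotone semiflows (e.g.\ unstable closed orbits of three-dimensional cooperative systems, the time-reversals of attracting cycles of competitive systems), so they cannot be ruled out by soft considerations; what has to be shown is that no open set of points can converge to one. Your two suggested routes do not do this: the assertion that ``the set of points whose forward orbit converges to a principal-direction-expanding invariant set has empty interior'' is precisely the claim to be proved, and the auxiliary monotone-convergence principle you propose to ``establish first'' is, as you admit, essentially equivalent to the obstacle; the continuity-point/Lyapunov-stability alternative is likewise only a hope (why should $\om x$ be stable at a continuity point of $z\mapsto\om z$ restricted to $D$?). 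There is also a secondary gap: you invoke ``the generating vector field $f$'' and differentiate $t\mapsto S_tq$, but Hypothesis \rth{hyp1} only makes each $S_t$ a $C^1$ map in $x$; no differentiability in $t$ and no generator are assumed, so the identity $DS_T(q)f(q)=f(q)$ that drives your $\rho=1$ case is not available as stated (this can probably be repaired, but it needs an argument).

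Note also that the paper does not prove this statement at all: it is quoted from Smith \& Thieme, whose proof proceeds by an entirely different route (the theory of strongly order preserving semiflows: nonordering of limit sets, the limit set dichotomy, and a sequential limit set trichotomy under compactness hypotheses), not by reduction to the discrete-time generic convergence theorem. So even if your reduction to $S_1$ were completed, you would be supplying a new argument rather than reconstructing the cited one; as it stands, the key step is missing.
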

There are also close analogues of Theorems \rth{mier1} and
\rth{parcs}.  The real analytic case is based on a
result in Jiang \cite{Ji91} that rules out setwise maximal totally
ordered stationary p-arcs; see also Chow \& Hale \cite{CH82}, p. 321.
These results are put together just as in the proof of Theorem
\rth{main}.  The details are left to the reader.  \qed

\paragraph{Proof of Theorem \rth{BHsemi}.}
The proof is similar to the proof in Section \rsec{red} of the case
$m=1$ Theorem 
\rth{BH}.   One shows using Theorem \rth{gen2} that if
$x>y\in K$, there is an equilibrium $q_1$ such that
\[\om x \ge q_1 \gg K\]  In this way one shows there exist a smallest
equilibrium $q\gg K$ and this $q$ satisfies the theorem.  \qed

\bigskip
\noindent
\footnotesize {\bf Acknowledgments. }  Correspondence with
M. Bena\"{\i}m and J. Mierczy\'nski has been very helpful in the
preparation of this article.  This research was partially supported by
grants from the National Science Foundation and the North Atlantic
Treaty Organization.  

\renewcommand\refname{\normalsize \bf  References}

\end{document}